\newcommand{\UP}{\blacktriangle}
\newcommand{\DOWN}{\blacktriangledown}
\theoremstyle{plain}
\newtheorem{theorem}{Theorem}[section]
\newtheorem{proposition}[theorem]{Proposition}
\newtheorem{lemma}[theorem]{Lemma}
\newtheorem{corollary}[theorem]{Corollary}
\theoremstyle{remark}
\newtheorem{remark}[theorem]{Remark}
\newtheorem{example}[theorem]{Example}
\begin{document}

\title[Representation of Nelson algebras by Rough Sets]{Representation of Nelson algebras by Rough Sets Determined by Quasiorders}

\author{Jouni J{\"a}rvinen}
\address{Jouni J{\"a}rvinen: University of Turku, FI-20014~Turku, Finland}
\email{jouni.kalervo.jarvinen@gmail.com}
\author{S{\'a}ndor Radeleczki}
\address{S{\'a}ndor Radeleczki: Institute of Mathematics \\ 
University of Miskolc \\ 3515~Miskolc-Egyetemv{\'a}ros \\ Hungary}
\email{matradi@uni-miskolc.hu}

\subjclass[2010]{Primary 06B15; Secondary  06D30, 08A05,  68T37, 06D10.}

\begin{abstract}
In this paper, we show that every quasiorder $R$ induces a Nelson 
algebra $\mathbb{RS}$ such that the underlying rough set lattice $RS$ 
is algebraic. We note that $\mathbb{RS}$ is a three-valued
{\L}ukasiewicz algebra if and only if $R$ is an equivalence.
Our main result says that if $\mathbb{A}$ is a Nelson algebra 
defined on an algebraic lattice, then there exists a set $U$
and a quasiorder $R$ on $U$ such that $\mathbb{A} \cong \mathbb{RS}$.
\end{abstract}

\maketitle

\section{Introduction} \label{Sec:Intro}

Nelson algebras, also called $\mathcal{N}$-lattices or quasi-pseudo-Boolean
algebras, were introduced by H. Rasiowa as algebraic counterparts of the
constructive logic with strong negation by D. Nelson and A. A. Markov (see \cite{Rasiowa74}). 
They can be considered also as a generalisation of Boolean algebras. 
It is well known that any Boolean algebra defined on an algebraic lattice
is isomorphic to the powerset algebra $\wp(U)$ of some set $U$. 
In this paper, we prove an analogous result for Nelson algebras with 
algebraic underlying lattices and algebras of rough sets determined
by quasiorders.

Rough sets were introduced by Z.~Pawlak in \cite{Pawl82}.
In rough set theory it is assumed that our knowledge
about a universe of discourse $U$ is given in terms of
a binary relation reflecting the distinguishability or 
indistinguishability of the elements of $U$. Originally,
Pawlak assumed that this binary relation is an equivalence,
but in the literature numerous studies can be found 
in which approximations are determined also by other types of relations.

If $R$ is a given binary relation on $U$, then for any 
subset $X \subseteq U$,  the  \emph{lower approximation} of $X$ is defined as
\[
 X^\DOWN = \{x \in U \mid R(x) \subseteq X \}
\]
and the \emph{upper approximation of $X$} is 
\[
X^\UP = \{x \in U \mid R(x) \cap X \neq \emptyset \},
\] 
where $R(x) = \{ y \in U \mid x \, R \, y \}$.
The \emph{rough set of $X$} is the pair $\mathcal{A}(X) = (X^\DOWN,X^\UP)$
and the set of all \emph{rough sets} is
\[
 RS = \{ \mathcal{A}(X)  \mid X \subseteq U \}.
\]
The set $RS$  may be canonically ordered by the
coordinatewise order: 
$\mathcal{A}(X) \leq  \mathcal{A}(Y)$
holds in $RS$ if $X^\DOWN \subseteq Y^\DOWN$ and  $X^\UP \subseteq Y^\UP$. 

The structure of $RS$ is well studied in the case when $R$ is an equivalence;
see \cite{Com93, DemOrl02, GeWa92, Jarv07,Pagliani97,PagChak08,PomPom88}. 
In particular, J.~Pomyka{\l }a and J.~A.~Pomyka{\l }a showed in \cite{PomPom88} that 
$RS$ is a Stone lattice. Later this result was improved by S.~D.~Comer \cite{Com93} 
by showing that $RS$ is a regular double Stone lattice.
In  \cite{GeWa92}, M.~Gehrke and E.~Walker proved that $RS$ is isomorphic to 
$\mathbf{2}^I \times \mathbf{3}^J$, where $I$ is the set of singleton 
$R$-classes and $J$ is the set of non-singleton 
equivalence classes of $R$. Additionally, $RS$ forms a 
three-valued {\L}ukasiewicz algebra, as shown by P.~Pagliani \cite{Pagliani97}.
If $R$ is reflexive and symmetric or just transitive, then $RS$ is not necessarily 
even a semilattice. If $R$ is symmetric and transitive, then the structure of 
$RS$ is as in case of equivalences \cite{Jarv04}.

In \cite{JRV09}, we proved that any $RS$ determined by a quasiorder $R$
is a completely distributive lattice isomorphic to a complete 
ring of sets, and we described its completely join-irreducible elements. 
We also showed that $\mathbb{RS} = (RS,\cup, \cap, c,(\emptyset,\emptyset), (U,U))$
is a De~Morgan algebra, where the operation $c$ is defined by
$c \colon \mathcal{A}(X) \mapsto \mathcal{A}(U \setminus X)$.
In this paper, we prove that $\mathbb{RS}$ is in fact a Nelson
algebra defined on an algebraic lattice. The main objective of 
this work is to prove the following representation theorem.

\begin{theorem} \label{Thm:MAIN}
Let\/ $\mathbb{A} = (A,\vee,\wedge,c,0,1)$ be a Nelson algebra defined on an algebraic lattice. 
Then, there exists a set $U$ and a quasiorder $R$ on $U$ such that $\mathbb{A} \cong \mathbb{RS}$.
\end{theorem}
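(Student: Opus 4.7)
The plan is to build the quasiorder structure $(U,R)$ from the internal structure of $\mathbb{A}$, using the twist-product description of Nelson algebras combined with the characterization of algebraic distributive lattices as Alexandrov topologies of posets. The first step is to extract an underlying Heyting reduct $H(\mathbb{A})$. Via the twist-product construction of Nelson algebras (Sendlewski, Fidel, Vakarelov), every Nelson algebra embeds into a twist product of a Heyting algebra, and this Heyting algebra can be recovered intrinsically from $\mathbb{A}$. Assuming algebraicity transfers from $\mathbb{A}$ to $H(\mathbb{A})$---a point that itself requires checking---the algebra $H(\mathbb{A})$ is a complete algebraic Heyting algebra, hence isomorphic to the lattice of upsets of the poset $P$ of its completely join-irreducible elements.

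The second step is to reconstruct the universe. Set $U = \bigsqcup_{p \in P} F_p$, where each fibre $F_p$ is either a singleton or a two-element set. The size of $F_p$ is the Nelson-algebraic datum invisible to the Heyting reduct: $|F_p|=2$ precisely for those $p$ whose corresponding completely join-irreducible of $\mathbb{A}$ fails to be \emph{regular} with respect to the strong negation (equivalently, whose associated prime filter does not lift to a Boolean ultrafilter in the twist decomposition). Define $R$ on $U$ by $x\mathrel{R}y$ iff the fibre of $y$ lies above the fibre of $x$ in $P$. Then the classes of $R \cap R^{-1}$ are exactly the fibres $F_p$, and $U/(R\cap R^{-1})$ carries the expected order. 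This mimics the Gehrke--Walker decomposition $RS \cong \mathbf{2}^I \times \mathbf{3}^J$ cited in the introduction: singleton fibres contribute two-valued local factors, doubleton fibres three-valued ones.

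Finally, one defines an isomorphism $\varphi \colon \mathbb{A} \to \mathbb{RS}$ by specifying its values on completely join-irreducibles and extending by arbitrary joins (using algebraicity on both sides), then verifying preservation of all operations. The meets, joins, and bounds are essentially a matter of matching the Alexandrov topology on $P$ with the rough-set lattice, and should follow from the explicit description of $RS$ for quasiorders given in \cite{JRV09}, together with the description of its completely join-irreducibles there. The principal obstacle---and the heart of the proof---is to choose the fibre sizes correctly so that the strong negation $c$ on $\mathbb{A}$ is intertwined with the rough-set complement $\mathcal{A}(X)\mapsto \mathcal{A}(U\setminus X)$. Since the Heyting reduct $H(\mathbb{A})$ does not determine $\mathbb{A}$ uniquely, the fibre-size assignment must encode precisely the information that the strong negation adds. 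Once the correct assignment is identified, the verification that $\varphi$ is a Nelson-algebra isomorphism essentially reverses the earlier argument of the paper showing that every $\mathbb{RS}$ is a Nelson algebra on an algebraic lattice.
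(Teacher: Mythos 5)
Your overall shape is close to the paper's own construction: the paper takes $U=\mathcal{J}(A)$ itself, with $x\,R\,y$ iff $\rho(x)\le\rho(y)$ where $\rho$ collapses each pair $\{j,j^*\}$ to its member in $\mathcal{J}^-\cup\mathcal{J}^*$; the classes of $R\cap R^{-1}$ are exactly the sets $\{j,j^*\}$, so your picture of fibres of size $1$ or $2$ over a poset is the same combinatorial object, and the isomorphism is likewise defined on completely join-irreducibles and extended by joins via Corollary~\ref{Cor:DemorgranIsom}. But your write-up omits precisely the steps that carry the mathematical content. First, the fibre-size assignment: you call it ``the principal obstacle and the heart of the proof'' and then leave it as an unspecified condition about regularity and ultrafilters in a twist decomposition. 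The correct criterion is intrinsic and elementary --- the fibre is a doubleton exactly when $j\ne j^*$, i.e.\ $j\in\mathcal{J}^-\cup\mathcal{J}^+$ --- and once it is stated, the compatibility of the strong negation with the rough-set complement reduces to the pointwise check $\varphi(j^*)=\varphi(j)^*$. None of this needs the twist product; routing the argument through a Heyting reduct $H(\mathbb{A})$ only adds the unverified claims that algebraicity transfers to $H(\mathbb{A})$ and that its poset of completely join-irreducibles agrees with the quotient of $\mathcal{J}(A)$ by $j\sim j^*$.

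Second, and more seriously, you never locate where the Nelson axiom (N5) is used. As written, your plan would go through verbatim for any Kleene algebra on an algebraic lattice, and for those the theorem is false, since every $\mathbb{RS}$ satisfies the interpolation property (Proposition~\ref{Prop:RoughNelson}) while quasi-Nelson algebras need not. The hypothesis enters exactly when one proves that the map on join-irreducibles is order-\emph{preserving} in the mixed case $x\in\mathcal{J}^-$, $y\in\mathcal{J}^+$, $x\le y$: to get $(\emptyset,\{x\}^\UP)\le(R(y),R(y)^\UP)$ one must produce a common $R$-successor of $x$ and $y$, i.e.\ an interpolant $z\in\mathcal{J}$ with $x,y^*\le z\le x^*,y$, and this is supplied by condition (M) of Proposition~\ref{Prop:CharactNelson}, which is equivalent to Nelson-ness. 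A further, smaller error: the analogy with the Gehrke--Walker decomposition $\mathbf{2}^I\times\mathbf{3}^J$ does not survive the passage to quasiorders --- $RS$ is not a product of local factors over the fibres (Example~\ref{Exa:Chain} yields a $2n$-element chain) --- so the verification cannot be carried out fibrewise; it must be done globally on the ordered set of join-irreducibles, which is where the interpolation argument lives.
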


As a corollary we can also show that if $\mathbb{A}$ is a semisimple
Nelson algebra with an underlying algebraic lattice, then there exists 
a set $U$ and an equivalence $R$ on $U$ such that $\mathbb{A} \cong \mathbb{RS}$.

The paper is structured as follows. In the next section we recall some
notions and facts related to De~Morgan, Kleene, Nelson, and Heyting algebras. 
Section~\ref{SEC:CompletelyDistributiveKleene+Nelson} summarises 
some more or less known properties of completely join-irreducible elements 
of completely distributive Kleene algebras, which will be used in the proofs of our 
main results. In Section~\ref{Sec:RoughSetNelson}, 
we prove that rough set lattices induced by quasiorders determine Nelson algebras. 
We also show that $\mathbb{RS}$ is a three-valued {\L}ukasiewicz 
algebra only in case $R$ is an equivalence. Section~\ref{Sec:Representation} 
contains the proof of Theorem~\ref{Thm:MAIN} and some of its consequences.

\section{Preliminaries} \label{Sec:Preliminaries}

Systematic treatments of De~Morgan and Kleene algebras can be found in \cite{BaDw74,Rasiowa74}.
A \emph{De~Morgan algebra} $\mathbb{A}=(A,\vee,\wedge,c,0,1)$ is an algebra
of type $(2,2,1,0,0)$ such that $A$ is a bounded distributive
lattice with a least element $0$ and a greatest element $1$, and $c$ is a
unary operation that satisfies for all $x,y \in A$,
\begin{center}
$c(c(x)) = x$;\\
 $x\leq y$ \ if and only if \ $c(x)\geq c(y)$.
\end{center}
This definition implies that $c$ is an isomorphism between
the lattice $A$ and its dual $A^\partial$. Hence, it satisfies the equations:
\begin{eqnarray*}
c(x \vee y) & = & c(x) \wedge c(y); \\
c(x \wedge y) & = & c(x) \vee c(y). 
\end{eqnarray*}

An element $x$ of a complete lattice $L$ is \emph{completely join-irreducible}
if for every subset $S$ of $L$, $x = \bigvee S$ implies that $x \in S$.
The set of completely join-irreducible elements of $L$ is
denoted by $\mathcal{J}(L)$ --- or simply by $\mathcal{J}$ when there is no danger
of confusion. For any $x$, let $J(x) = \{ j \in \mathcal{J} \mid j \leq x \}$.

A complete lattice $L$ is \emph{completely distributive} if for any doubly indexed
family of elements $\{x_{i,\,j}\}_{i \in I, \, j \in J}$ of $L$, we have
\[
\bigwedge_{i \in I} \Big ( \bigvee_{j \in J} x_{i,\,j} \Big ) = 
\bigvee_{ f \colon I \to J} \Big ( \bigwedge_{i \in I} x_{i, \, f(i) } \Big ), \]
that is, any meet of joins may be converted into the join of all
possible elements obtained by taking the meet over $i \in I$ of
elements $x_{i,\,k}$\/, where $k$ depends on $i$.

We say that the De~Morgan algebra $\mathbb{A}$ is \textit{completely distributive}, 
if its underlying lattice $A$ is completely distributive. In such a case, we 
may define for any $j \in \mathcal{J}$ the element
\[ j^* = \bigwedge \{ x \in A \mid x \nleq c(j) \}. \]
It is well known that $j^{\ast}\in \mathcal{J}$ (see e.g. \cite{Mont63a}). 
The next lemma for a finite $\mathbb{A}$ was proved in \cite{Mont63a}, 
and it is contained implicitly in \cite{Cign86}.

\begin{lemma} \label{Lem:StarProperties}
If\/ $\mathbb{A}$ is a completely distributive De~Morgan algebra,
then for all $i,j \in \mathcal{J}$:
\begin{enumerate}[\rm (a)]
\item $j^* \nleq c(j)$;
\item $i \leq j$ implies $i^* \geq j^*$;
\item $j^{**} = j$.
\end{enumerate}
\end{lemma}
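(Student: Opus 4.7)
The plan hinges on a fundamental property of completely distributive lattices: every $j\in\mathcal{J}$ is in fact completely join-\emph{prime}, so $j\leq\bigvee X$ implies $j\leq x$ for some $x\in X$. This is a standard consequence of complete distributivity and is implicit in the references cited for the lemma. Since $c$ is a dual order-isomorphism of $A$, it sends $\mathcal{J}$ bijectively onto $\mathcal{M}$, the set of completely meet-irreducibles, and each $c(j)$ is accordingly completely meet-prime.

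Parts (a) and (b) are then quick. For (a), if $j^{\ast}=\bigwedge\{x\mid x\nleq c(j)\}\leq c(j)$, meet-primality of $c(j)$ would produce some $x$ from the indexing set with $x\leq c(j)$, contradicting its very membership. For (b), $i\leq j$ gives $c(j)\leq c(i)$, hence $\{x\mid x\nleq c(i)\}\subseteq\{x\mid x\nleq c(j)\}$; meets reverse inclusion, and we conclude $i^{\ast}\geq j^{\ast}$.

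The substance of the lemma lies in (c). My first step is to compute $c(j^{\ast})$ explicitly: moving $c$ through the meet and reindexing via $y=c(x)$ gives
\[
c(j^{\ast})=\bigvee\{y\in A\mid y\ngeq j\},
\]
the largest element of $A$ failing to dominate $j$. Complete join-primality of $j$ then shows $j\nleq c(j^{\ast})$: otherwise $j\leq y$ for some $y\ngeq j$, which is absurd. Consequently $j$ belongs to $\{x\mid x\nleq c(j^{\ast})\}$, forcing $j^{\ast\ast}\leq j$.

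For the reverse inequality I would combine part (a), applied one level up to $j^{\ast}\in\mathcal{J}$, with the explicit description of $c(j^{\ast})$ just derived. Part (a) gives $j^{\ast\ast}\nleq c(j^{\ast})$, whereas the description shows that every $z\in A$ with $z<j$ satisfies $z\ngeq j$ and hence $z\leq c(j^{\ast})$. Since $j^{\ast\ast}\leq j$, a strict inequality $j^{\ast\ast}<j$ would place $j^{\ast\ast}$ below $c(j^{\ast})$, contradicting part (a); therefore $j^{\ast\ast}=j$. The main delicate point, where I expect the proof to demand the most care, is the identification $c(j^{\ast})=\bigvee\{y\mid y\ngeq j\}$ together with the bootstrap of (c) off (a) one level up, which is precisely what makes the already-cited fact $j^{\ast}\in\mathcal{J}$ indispensable.
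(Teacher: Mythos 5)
Your argument is correct. Note that the paper does not prove this lemma at all --- it is quoted from Monteiro and Cignoli with the remark that it is ``contained implicitly'' in those sources --- so there is no in-text proof to compare against; what you have supplied is a complete, self-contained derivation, and it is sound. The two pillars you lean on both hold: in a completely distributive lattice every completely join-irreducible $j$ is completely join-prime (from $j=j\wedge\bigvee S=\bigvee_{s\in S}(j\wedge s)$), and dually each $c(j)$ is completely meet-prime, which gives (a) and (b) at once. The computation $c(j^{\ast})=\bigvee\{y\in A\mid y\ngeq j\}$ is the right key to (c): join-primality of $j$ yields $j\nleq c(j^{\ast})$ and hence $j^{\ast\ast}\leq j$, and then (a) applied to $j^{\ast}\in\mathcal{J}$ (a fact the paper records just before the lemma, so you may invoke it) excludes $j^{\ast\ast}<j$, since every element that fails to dominate $j$ lies below $c(j^{\ast})$. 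The only point worth making explicit is that the index set $\{x\mid x\nleq c(j)\}$ is nonempty, equivalently $c(j)\neq 1$; this holds because $0\notin\mathcal{J}$, and it is what licenses the meet-primality step in (a) (and guarantees $j^{\ast}\neq 1$). With that observation added, the proof is complete.
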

Notice that statements (b) and (c) of Lemma~\ref{Lem:StarProperties} 
mean that the map $j \mapsto j^*$ is an order-isomorphism between
the ordered set $\mathcal{J}$ and its dual $\mathcal{J}^\partial$.

A complete lattice $L$ is said to be \emph{algebraic} 
if any element $x\in L$ is the join of a set of
compact elements of $L$ (see e.g. \cite{Grat98}). 
A \emph{complete ring of sets} is a family of sets $\mathcal{F}$ such that
$\bigcup \mathcal{H}$ and $\bigcap \mathcal{H}$ belong to $\mathcal{F}$ 
for any $\mathcal{H} \subseteq  \mathcal{F}$.

In the next remark we give some conditions under which
a lattice is isomorphic to a complete ring of sets (cf. \cite{DaPr02}).

\begin{remark}\label{Rem:CharacterazaionLatticesOfSets}
Let $L$ be a lattice. The following are equivalent:
\begin{enumerate}[\rm (a)]
\item $L$ is isomorphic to a complete ring of sets;
\item $L$ is algebraic and completely distributive;
\item $L$ is distributive and doubly algebraic (i.e.\@ both $L$ and $L^\partial$ are algebraic);
\item $L$ is algebraic, distributive and every element of $L$ is a join of completely join-irreducible elements of $L$.
\end{enumerate}
\end{remark}

If $\mathbb{A}$ is a De~Morgan algebra whose underlying lattice is algebraic, 
then $A$ is doubly algebraic, since $A$ is self-dual. Thus, $A$ has all
equivalent properties of Remark~\ref{Rem:CharacterazaionLatticesOfSets}. 
The next connection between the maps $c \colon A \rightarrow A$ and 
$^* \colon \mathcal{J} \rightarrow \mathcal{J}$
was proved in \cite{Mont63a} for finite algebras and in the
completely distributive case it is contained implicitly
in \cite{Vaka77} (see also \cite{Varl85}).

\begin{lemma} \label{Lem:StarCompl}
If\/ $\mathbb{A}$ is a De~Morgan algebra defined on an algebraic lattice,
then for all $x \in A$,
$c(x) = \bigvee \{ j \in \mathcal{J} \mid j^* \nleq x\}$.
\end{lemma}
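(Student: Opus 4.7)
The plan is to reduce the claimed identity to a pointwise statement about completely join-irreducible elements, and then to unwind the defining formula of $j^{\ast}$.

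First, since the underlying lattice $A$ is algebraic and, via the order-reversing involution $c$, self-dual, $A$ is doubly algebraic and distributive, so by Remark~\ref{Rem:CharacterazaionLatticesOfSets} every element is a join of completely join-irreducibles. Applying this to $c(x)$ gives
\[
c(x) \;=\; \bigvee\{\, j\in \mathcal{J} \mid j \leq c(x)\,\}.
\]
Hence it is enough to show that the indexing set on the right-hand side above equals $\{\,j\in\mathcal{J}\mid j^{\ast}\nleq x\,\}$. Using the order-reversal property $j\leq c(x)\Leftrightarrow x\leq c(j)$, this reduces to proving, for every $j\in\mathcal{J}$, the equivalence $x\leq c(j) \iff j^{\ast}\nleq x$.

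For the forward implication, if $x\leq c(j)$ and also $j^{\ast}\leq x$, then $j^{\ast}\leq c(j)$, contradicting Lemma~\ref{Lem:StarProperties}(a); so $j^{\ast}\nleq x$. For the converse, recall that by definition $j^{\ast}=\bigwedge\{\,y\in A\mid y\nleq c(j)\,\}$. If $x\nleq c(j)$, then $x$ itself lies in the indexing set, forcing $j^{\ast}\leq x$. Contrapositively, $j^{\ast}\nleq x$ forces $x\leq c(j)$. Combining the two directions yields the required set equality and hence the formula for $c(x)$.

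This is essentially a direct calculation, and I do not expect any serious obstacle. The only point that merits attention is the passage from ``$A$ algebraic'' to ``every element is a join of completely join-irreducibles'', which requires invoking the self-duality of $A$ under $c$ in order to apply Remark~\ref{Rem:CharacterazaionLatticesOfSets}(d); the remainder is a two-line contrapositive argument built on the definition of $j^{\ast}$ and clause (a) of Lemma~\ref{Lem:StarProperties}.
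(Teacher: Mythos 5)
Your argument is correct. The paper itself gives no proof of this lemma---it only cites Monteiro and Vakarelov---so there is no in-paper argument to compare against; your direct verification is a legitimate way to make the statement self-contained. The two key reductions both check out: the decomposition $c(x)=\bigvee\{j\in\mathcal{J}\mid j\leq c(x)\}$ is justified because a De~Morgan algebra on an algebraic lattice is doubly algebraic and distributive, hence satisfies Remark~\ref{Rem:CharacterazaionLatticesOfSets}(d) (and is completely distributive, which is what licenses both the definition of $j^{\ast}$ and the use of Lemma~\ref{Lem:StarProperties}(a)); and the pointwise equivalence $j\leq c(x)\iff j^{\ast}\nleq x$ follows exactly as you say, one direction from $j^{\ast}\nleq c(j)$ and the other from the fact that $x\nleq c(j)$ places $x$ in the meet defining $j^{\ast}$. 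No gaps.
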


Let $L$ and $K$ be two completely distributive lattices such that
any element of them is a join of completely join-irreducible elements, and
assume that $\varphi\colon \mathcal{J}(L) \to \mathcal{J}(K)$ is an order-isomorphism of ordered sets. 
G.~Birkhoff \cite{Birk95} proved that in this case the map
$\Phi \colon L \to K$,
\[
\Phi(x) = \bigvee \varphi(J(x)) 
\]
is a lattice-isomorphism. One may extend this result to De~Morgan algebras.

\begin{corollary}\label{Cor:DemorgranIsom}
Let\/ $\mathbb{L} = (L,\vee,\wedge,c,0,1)$ and $\mathbb{K} = (K,\vee,\wedge,c,0,1)$ 
be two De~Morgan algebras defined on algebraic lattices.
If $\varphi \colon \mathcal{J}(L) \to \mathcal{J}(K)$ is an  order-isomorphism 
such that
\[
\varphi(j^*) = \varphi(j)^*
\]
for all $j \in \mathcal{J}(L)$, then $\Phi$ is an isomorphism between the algebras 
$\mathbb{L}$ and $\mathbb{K}$.
\end{corollary}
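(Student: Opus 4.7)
The plan is to combine Birkhoff's lattice-isomorphism theorem, which is already cited just before the statement, with the explicit formula for $c$ in Lemma~\ref{Lem:StarCompl}. Since $L$ and $K$ are De~Morgan algebras on algebraic lattices, both are doubly algebraic and completely distributive, so every element is the join of the completely join-irreducibles below it. By the cited result of Birkhoff, the map $\Phi(x) = \bigvee \varphi(J(x))$ is already a lattice isomorphism $L \to K$; the only thing left to verify is that it commutes with the De~Morgan negation, i.e., $\Phi(c(x)) = c(\Phi(x))$ for all $x \in L$.

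First I would record the useful fact that $\Phi$ agrees with $\varphi$ on completely join-irreducibles: for $j \in \mathcal{J}(L)$ one has $\Phi(j) = \bigvee \varphi(J(j)) = \bigvee J(\varphi(j)) = \varphi(j)$, the last equality because $\varphi(j)$ is the maximum of $J(\varphi(j))$ in the algebraic lattice $K$. Because $\Phi$ is an order-isomorphism, this yields the translation principle
\[
 \varphi(j^\ast) \leq \Phi(x) \iff \Phi(j^\ast) \leq \Phi(x) \iff j^\ast \leq x,
\]
for every $j \in \mathcal{J}(L)$ and every $x \in L$.

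Next I would apply Lemma~\ref{Lem:StarCompl} in both algebras. In $L$ we get $c(x) = \bigvee\{j \in \mathcal{J}(L) \mid j^\ast \nleq x\}$, so
\[
\Phi(c(x)) \;=\; \bigvee \varphi\bigl(\{j \in \mathcal{J}(L) \mid j^\ast \nleq x\}\bigr).
\]
In $K$ we get $c(\Phi(x)) = \bigvee\{k \in \mathcal{J}(K) \mid k^\ast \nleq \Phi(x)\}$. Since $\varphi$ is a bijection $\mathcal{J}(L) \to \mathcal{J}(K)$, it suffices to check that an element $j \in \mathcal{J}(L)$ satisfies $j^\ast \nleq x$ if and only if $\varphi(j)^\ast \nleq \Phi(x)$. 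Here I would plug in the hypothesis $\varphi(j)^\ast = \varphi(j^\ast)$ and then apply the translation principle from the previous paragraph. This equates the two joins and yields $\Phi(c(x)) = c(\Phi(x))$, so $\Phi$ is a De~Morgan isomorphism.

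The only subtle step is the one just described, namely the identification of $\Phi$ with $\varphi$ on $\mathcal{J}(L)$; after that the computation is purely formal. I do not foresee any real obstacle, provided one carefully uses both the preservation of the star operation by $\varphi$ and the order-isomorphism property of $\Phi$.
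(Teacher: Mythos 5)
Your proof is correct and follows essentially the same route as the paper: Birkhoff's theorem gives the lattice isomorphism $\Phi$, Lemma~\ref{Lem:StarCompl} expresses $c$ in both algebras via joins of join-irreducibles, and the hypothesis $\varphi(j^*)=\varphi(j)^*$ matches the two index sets. The only cosmetic difference is that the paper re-indexes the join by the bijection $j\mapsto j^*$ (using $j^{**}=j$) while you compare the defining conditions directly via your translation principle.
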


\begin{proof}
Since the map $\Phi \colon L \to K$ is 
a lattice-isomorphism that maps $J(x)$ onto $J(\Phi(x))$, the set 
$\{ j \in \mathcal{J}(L) \mid j \nleq x\}$ is mapped by $\Phi$ onto the set 
$\{ k\in \mathcal{J}(K)\mid k \nleq\Phi(x) \}$. By using this fact, we prove that
\[
\Phi(c(x))=c(\Phi(x)).
\]
Indeed, since $j \in \mathcal{J}(L)$, $\Phi(j) = k$ implies 
\[ \Phi(j^*)=\varphi(j^*) = \varphi(j)^* = \Phi(j)^* = k^*,\] 
the set $\{ j^* \mid j \in \mathcal{J}(L) \mbox{ and } j \nleq x\}$ 
is mapped by $\Phi$ onto the set  
$\{k^* \mid k \in \mathcal{J}(K) \mbox{ and } k \nleq\Phi(x)\}$. 
By Lemma~\ref{Lem:StarCompl}, 
\[ c(x) = \bigvee \{ j \in \mathcal{J}(L) \mid j^* \nleq x\} 
= \bigvee \{ j^* \mid j \in \mathcal{J}(L) \mbox{ and } j \nleq x\};\] 
recall that $j = j^{**}$. Similarly, we get
\[ 
c(\Phi(x)) = \bigvee \{ k^* \mid k \in \mathcal{J}(K) \mbox{ and } k \nleq \Phi(x) \}.
\]
Hence, we have 
\begin{align*}
\Phi(c(x)) &= \bigvee \{ \Phi(j^*) \mid j \in \mathcal{J}(L)\mbox{ and } j^* \nleq x\} \\
           &= \bigvee \{ k^* \mid k \in \mathcal{J}(K) \mbox{ and } k \nleq \Phi(x) \} \\
           &= c(\Phi(x)).
\end{align*}
\end{proof}

A De~Morgan algebra $\mathbb{A}$ is a \emph{Kleene algebra}
if for all $x,y \in A$,  
\[
x \wedge c(x) \leq y \vee c(y).
\]
We define for a Kleene algebra $\mathbb{A}$ two sets:
\[
 A^+ = \{x \vee c(x) \mid x \in A\} \quad \mbox { and } \quad  A^- = \{x \wedge c(x) \mid x \in A\}.
\]
The proof of the following lemma is straightforward and is omitted.

\begin{lemma} \label{Lem:KleeneVeeWedge}
Let\/  $\mathbb{A}$ be a Kleene algebra. Then,
\begin{enumerate}[\rm (a)]
\item $c(A^+) = A^-$ \ and \ $c(A^-) = A^+$;
\item $a \leq c(b)$ for all $a,b \in A^-$;
\item $c(a) \leq b$ for all $a,b \in A^+$; 
\item $a \in A^-$ \ iff \ $a \leq c(a)$;
\item $a \in A^+$ \ iff \ $c(a) \leq a$.
\end{enumerate}
\end{lemma}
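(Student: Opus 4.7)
The plan is to exploit three basic De~Morgan facts — $c(c(x))=x$, the De~Morgan laws $c(x\vee y)=c(x)\wedge c(y)$ and $c(x\wedge y)=c(x)\vee c(y)$, and the Kleene inequality $x\wedge c(x)\leq y\vee c(y)$ — and read off each clause essentially by direct computation. Nothing deeper is needed: every element of $A^-$ has the form $x\wedge c(x)$ and every element of $A^+$ has the form $x\vee c(x)$, and the De~Morgan laws swap these two shapes under $c$.

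For (a), given any $x\in A$, apply $c$ to $x\vee c(x)$ to get $c(x\vee c(x))=c(x)\wedge c(c(x))=c(x)\wedge x\in A^-$, and symmetrically $c(x\wedge c(x))=c(x)\vee x\in A^+$. Combined with $c\circ c=\mathrm{id}$, this gives both equalities $c(A^+)=A^-$ and $c(A^-)=A^+$. For (b), take $a=x\wedge c(x)\in A^-$ and $b=y\wedge c(y)\in A^-$; then $c(b)=c(y)\vee y$, and the Kleene inequality applied to $x$ and $c(y)$ (or directly to $x$ and $y$) yields $x\wedge c(x)\leq y\vee c(y)=c(b)$. Clause (c) is dual: take $a=x\vee c(x)$, $b=y\vee c(y)$, note that $c(a)=c(x)\wedge x\in A^-$, and invoke the Kleene inequality again to conclude $c(a)\leq b$.

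For (d), the forward direction follows from (b) applied with $b=a$, giving $a\leq c(a)$. Conversely, if $a\leq c(a)$, then $a=a\wedge c(a)\in A^-$, since $a$ itself plays the role of the defining $x$. Clause (e) is the exact dual, using (c) one way and the identity $a=a\vee c(a)$ the other.

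There is no real obstacle here; the only small thing to keep straight is the direction of the inclusions in (a), which is why it is cleanest to establish both $c(A^+)\subseteq A^-$ and $c(A^-)\subseteq A^+$ explicitly and then invoke $c\circ c=\mathrm{id}$ to upgrade them to equalities. The role of the Kleene axiom is confined to (b) and (c) (and, through them, to the nontrivial implications of (d) and (e)); the remaining assertions use only the De~Morgan structure.
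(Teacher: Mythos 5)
Your proof is correct, and it is exactly the kind of direct computation the authors had in mind: the paper itself omits the proof of this lemma as ``straightforward,'' so there is no alternative argument to compare against. Every step checks out --- the De~Morgan laws handle (a), the Kleene inequality $x\wedge c(x)\leq y\vee c(y)$ gives (b) and (c), and the absorption identities $a=a\wedge c(a)$ (when $a\leq c(a)$) and $a=a\vee c(a)$ (when $c(a)\leq a$) supply the nontrivial directions of (d) and (e).
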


Additionally, for any Kleene algebra $\mathbb{A}$, $A^{-}$ is an 
ideal and $A^{+}$ is a filter of $A$, as noted in  e.g. \cite{KaarliPixley}.
Let $\mathbb{A}$ be a Kleene algebra defined on a complete lattice $A$.
We denote
\[ \alpha = \bigvee A^- \quad \mbox{ and } \quad \beta = \bigwedge A^+.\]
Lemma~\ref{Lem:KleeneVeeWedge} implies easily the following result.

\begin{corollary} \label{Cor:Kleene3}
If\/ $\mathbb{A}$ is a Kleene algebra defined on a complete lattice,
then $A^- = (\alpha]$, $A^+ = [\beta)$, and $c(\alpha) = \beta$.
\end{corollary}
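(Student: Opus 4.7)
The plan is to use the characterizations given in Lemma~\ref{Lem:KleeneVeeWedge}, especially the filter/ideal conditions (d) and (e) together with the cross-comparison (b), plus the fact that $c$ is a dual lattice isomorphism (so it converts arbitrary joins into meets on any complete lattice).

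First I would show that $\alpha \in A^-$. By Lemma~\ref{Lem:KleeneVeeWedge}(b), every element of $A^-$ lies below $c(b)$ for each $b \in A^-$; taking meets over $b$ and using $c(\bigvee A^-) = \bigwedge c(A^-)$, one obtains $a \le c(\alpha)$ for every $a \in A^-$, hence $\alpha \le c(\alpha)$. By Lemma~\ref{Lem:KleeneVeeWedge}(d) this means $\alpha \in A^-$.

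Next I would verify $A^- = (\alpha]$. The inclusion $A^- \subseteq (\alpha]$ is immediate from the definition of $\alpha$ as the join of $A^-$. For the reverse, given $y \le \alpha$, order-reversal of $c$ gives $c(\alpha) \le c(y)$, so combining with $y \le \alpha \le c(\alpha)$ yields $y \le c(y)$, i.e.\ $y \in A^-$ by Lemma~\ref{Lem:KleeneVeeWedge}(d). The equality $A^+ = [\beta)$ follows by the dual argument (replacing (b) by (c) and (d) by (e), and using $\beta = \bigwedge A^+ \in A^+$).

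Finally, for $c(\alpha) = \beta$, I would use Lemma~\ref{Lem:KleeneVeeWedge}(a), which says $c(A^-) = A^+$, together with the fact that in a complete lattice the order-reversing involution $c$ satisfies $c(\bigvee S) = \bigwedge c(S)$ for every $S \subseteq A$. Applying this to $S = A^-$ gives
\[
c(\alpha) = c\!\left(\bigvee A^-\right) = \bigwedge c(A^-) = \bigwedge A^+ = \beta.
\]
There is no real obstacle here; the only point that needs a moment of care is recognizing that completeness of $A$ is exactly what makes the chain of arguments above possible (so that $\alpha, \beta$ exist, the meets and joins in the $c$-computation are well-defined, and $A^-$, $A^+$ are actually principal as ideal/filter).
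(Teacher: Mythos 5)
Your proof is correct, and it follows exactly the route the paper intends: the paper omits the argument, stating only that Corollary~\ref{Cor:Kleene3} follows easily from Lemma~\ref{Lem:KleeneVeeWedge}, and your use of parts (a)--(e) together with the fact that $c$ is a dual isomorphism (so $c(\bigvee S)=\bigwedge c(S)$ on a complete lattice) is precisely that easy derivation, carried out explicitly.
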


A \emph{Heyting algebra} $L$ is a bounded lattice such that for all $a,b \in L$, there 
is a greatest element $x$ of $L$ such that
\[ a \wedge x \leq b .\]
This element is the \emph{relative pseudocomplement} of $a$ with respect to $b$, 
and is denoted $a \Rightarrow b$. It is well known that any completely distributive
lattice $L$ is a Heyting algebra $(L,\vee,\wedge,\Rightarrow,0,1)$
such that the relative pseudocomplement is defined as 
\[
x \Rightarrow y  =  \bigvee \big \{ z \in L \mid z \wedge x \leq y \big \}.
\]

According to R. Cignoli [3], a \emph{quasi-Nelson} algebra is a Kleene algebra $(A,\vee,\wedge,c,0,1)$
such that for each pair $a$ and $b$ of its elements, the relative pseudocomplement
\[ a \Rightarrow (c(a) \vee b) \]
exists. This means that every Kleene algebra whose underlying lattice is a Heyting
algebra, and, in particular, any Kleene algebra defined on an algebraic lattice, 
forms a quasi-Nelson algebra.

In quasi-Nelson algebras, $a \Rightarrow (c(a) \vee b)$ is denoted simply by
$a \to b$ and this is called the \emph{weak relative pseudocomplement} of $a$
with respect to $b$. As shown by D.~Brignole and A.~Monteiro in \cite{BrigMont67}, 
the operation $\to$ satisfies the equations:
\begin{enumerate}[({N}1)]
\item $a \to a = 1$;
\item $(c(a) \vee b) \wedge (a \to b ) = c(a) \vee b$;
\item $a \wedge (a \to b) = a \wedge (c(a) \vee b)$;
\item $a \to (b \wedge c) = (a \to b) \wedge (a \to c)$.
\end{enumerate}

A \emph{Nelson algebra} is a quasi-Nelson algebra satisfying the equation
\begin{enumerate}[(N5)]
\item[(N5)] $(a\wedge b)\rightarrow c = a \rightarrow (b\rightarrow c)$.
\end{enumerate}
Note that it is shown in  \cite{BrigMont67} that Nelson algebras can be
equationally characterized as algebras $(A, \vee, \wedge, \to, c, 0, 1)$,
where $(A, \vee, \wedge, c, 0, 1)$ is a Kleene algebra and the binary 
operation $\to$ satisfies (N1)--(N5).

\section{Completely Distributive Kleene and Nelson Algebras}
\label{SEC:CompletelyDistributiveKleene+Nelson}

In this section, we present some more or less known properties
that are used in the proofs of our main results.
First, we show that in completely distributive Kleene algebras,
the set of completely join-irreducible elements $\mathcal{J}$ 
can be divided into three disjoint sets in terms of the map 
$^*\colon \mathcal{J} \to \mathcal{J}$.

For any set $A$, let $X'$ denote the set-theoretical complement $A \setminus X$  
of any subset $X \subseteq A$. Let $\mathbb{A}$ be a De~Morgan algebra.
We denote the set of its prime filters by $\mathcal{F}_p$.  
A map $g \colon \mathcal{F}_p \to \mathcal{F}_p$ is defined by setting
\[
g(P) =  c(P)'
\]
for all $P \in \mathcal{F}_p$. Since $c(P)$ is a prime ideal of the lattice $A$, 
$c(P)'$ is a prime filter of $A$. It is easy to verify (see e.g.\@ \cite{Cign86}) 
that for all $P \in \mathcal{F}_p$,
\[ g(g(P)) = P \]
and for any $P,Q \in \mathcal{F}_p$, we have
\[
P \subseteq Q \mbox{ \ implies \ } g(Q)\subseteq g(P).
\]

If $\mathbb{A}$ is a completely distributive De~Morgan algebra, 
the map $g$ is related to the map $^* \colon \mathcal{J}  \to \mathcal{J}$, because
for every $j \in \mathcal{J}$, 
\[
g([j)) = \{ c(x) \mid j \leq x\}' = \{x \mid x \leq c(j) \}'  = \{x \mid x \nleq c(j) \}  = [j^*);
\]
note that in any distributive lattice, $[j)$ is a prime filter for each $j \in \mathcal{J}$.

Let $\mathbb{A}$ be a Kleene algebra. It is known (see e.g.\@ \cite{Cign86})
that in this case for any prime filter $P$ of $A$, we have either $g(P) \subseteq P$
or $P \subseteq g(P)$. In \cite{Cign86}, the 
following two sets were defined:
\begin{align*}
\mathcal{F}_p^+ &= \{ P \in \mathcal{F}_p \mid P \subseteq g(P) \}; \\
\mathcal{F}_p^- &= \{ P \in \mathcal{F}_p \mid g(P) \subseteq P \}.
\end{align*}
Then, $\mathcal{F}_p = \mathcal{F}_p^+ \cup \mathcal{F}_p^-$.

\begin{remark}\label{Rem:Comparable}
From the above, it follows that for any $j \in \mathcal{J}$ in a completely
distributive Kleene algebra $\mathbb{A}$, $j$ and $j^*$ are comparable. 
Indeed, for any $j \in \mathcal{J}$, either $[j) \in \mathcal{F}_p^+$
or $[j) \in \mathcal{F}_p^-$ holds. If $[j) \in \mathcal{F}_p^+$, then 
$[j) \subseteq g([j)) = [j^*)$ implies $j^* \leq j$, and for 
$[j) \in \mathcal{F}_p^-$, $[j^*) = g([j)) \subseteq [j)$ gives
$j \leq j^*$.
\end{remark}

Let $\mathbb{A}$ be a completely distributive Kleene algebra. 
We may now define three disjoint sets:
\begin{align*}
\mathcal{J}^- & = \{ j \in \mathcal{J} \mid j < j^* \}; \\
\mathcal{J}^* & = \{ j \in \mathcal{J} \mid j = j^* \}; \\
\mathcal{J}^+ & = \{ j \in \mathcal{J} \mid j > j^* \}. 
\end{align*}
Then clearly,
\begin{center}
$[j) \in \mathcal{F}_p^+ \iff j \in \mathcal{J}^+ \cup \mathcal{J}^*$, \\
$[j) \in \mathcal{F}_p^- \iff j \in \mathcal{J}^- \cup \mathcal{J}^*$,
\end{center}
and in view of Remark~\ref{Rem:Comparable}, we have 
$\mathcal{J} = \mathcal{J}^- \cup  \mathcal{J}^* \cup  \mathcal{J}^+$.
The next lemma contains some simple known facts, 
but it is proved to make our proofs consistent. 

\begin{lemma}\label{Lem:KleeneDivision}
If\/ $\mathbb{A}$ is a completely distributive Kleene algebra, then for all $j \in \mathcal{J}$:

\begin{enumerate}[\rm (a)]
\item If $j \notin A^-$, then $j^* \leq j$;
\item $\mathcal{J}^- = \mathcal{J} \cap A^-$;
\item $j \in \mathcal{J}^-  \iff j^* \in \mathcal{J}^+$.
\end{enumerate}
\end{lemma}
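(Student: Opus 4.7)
The plan is to prove (a) directly from the defining formula for $j^{*}$, then derive (b) as an immediate consequence, and finally obtain (c) by applying the involution $j \mapsto j^{*}$.

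For (a), I would unfold the definition $j^{*} = \bigwedge\{x \in A \mid x \nleq c(j)\}$. The hypothesis $j \notin A^{-}$ reads $j \nleq c(j)$ by Lemma~\ref{Lem:KleeneVeeWedge}(d), so $j$ itself belongs to the indexing set of this meet. Hence $j^{*} \leq j$, which is what (a) asserts.

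For (b), one inclusion is just the contrapositive of (a): if $j \in \mathcal{J}\setminus A^{-}$, then $j^{*} \leq j$, so $j \notin \mathcal{J}^{-}$. For the converse, assume $j \in \mathcal{J} \cap A^{-}$, that is $j \leq c(j)$. By Remark~\ref{Rem:Comparable}, $j$ and $j^{*}$ are comparable, so either $j \leq j^{*}$ or $j^{*} \leq j$. The latter would give $j^{*} \leq c(j)$, contradicting Lemma~\ref{Lem:StarProperties}(a); and equality $j = j^{*}$ is ruled out by the same token. Hence $j < j^{*}$, i.e.\ $j \in \mathcal{J}^{-}$.

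Part (c) then drops out at once from (c) of Lemma~\ref{Lem:StarProperties}: since $j^{**}=j$, the inequality $j < j^{*}$ defining $j \in \mathcal{J}^{-}$ is literally the same as $(j^{*})^{*} < j^{*}$, which is the defining condition for $j^{*} \in \mathcal{J}^{+}$. The one place calling for a little care is part (b), where one must invoke Remark~\ref{Rem:Comparable} to upgrade the bare inequality $j^{*} \nleq c(j)$ to a comparison between $j$ and $j^{*}$; without that comparability, none of the trichotomy $\mathcal{J}=\mathcal{J}^{-}\cup\mathcal{J}^{*}\cup\mathcal{J}^{+}$ would be available. Beyond that, the arguments are purely mechanical unwindings of definitions.
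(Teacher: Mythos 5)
Your proof is correct and follows essentially the same route as the paper's: part (a) from the defining meet for $j^*$ together with Lemma~\ref{Lem:KleeneVeeWedge}(d), part (b) via the contrapositive of (a) plus the comparability of $j$ and $j^*$ and Lemma~\ref{Lem:StarProperties}(a), and part (c) from the involution $j^{**}=j$. You are merely a bit more explicit than the paper about where Remark~\ref{Rem:Comparable} is needed, which is a fair observation rather than a difference in method.
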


\begin{proof} 
(a) If $j \notin A^-$, then $j \nleq c(j)$ by Lemma~\ref{Lem:KleeneVeeWedge}(d)
and so $j^* \leq j$.

(b) If $j \in \mathcal{J}^-$, then $j \notin A^-$ is not possible by (a),
and we obtain $\mathcal{J}^- \subseteq \mathcal{J} \cap A^-$. Conversely, 
if $j \in \mathcal{J} \cap A^-$, then $j \leq c(j)$ and hence 
$j^* \leq j$ is not possible. Thus, we get $j < j^*$, that is, 
$j \in \mathcal{J}^-$. This proves $\mathcal{J}^- = \mathcal{J} \cap A^-.$

(c) If $j \in \mathcal{J}^-$, then $j^* > j = j^{**}$ and $j^* \in \mathcal{J}^+$.
The other direction is proved analogously.
\end{proof}

Following A.~Monteiro \cite{Mont63a}, a Kleene algebra $\mathbb{A}$ is said 
to have the \emph{interpolation property}, if for any 
$P,Q \in \mathcal{F}_p^+$ such that $P \subseteq g(Q)$, 
there exists a prime filter $F$ of $A$ fulfilling the conditions 
$P \subseteq F \subseteq g(P)$ and $Q \subseteq F \subseteq g(Q)$ 
(see also \cite{Mont80}).

\begin{theorem}\cite[Theorem 3.5]{Cign86}  \label{Thm:Interpolation}
A quasi-Nelson algebra is a Nelson algebra if and only if it has the
interpolation property.
\end{theorem}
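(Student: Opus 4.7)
The plan is to prove both directions by passing between the algebraic language of $\to$ and the topological language of prime filters, using the duality induced by $g \colon \mathcal{F}_p \to \mathcal{F}_p$. The key observation driving the proof is that $\mathcal{F}_p^+$ corresponds, via $[j) \mapsto j$, to elements in the upper part of the Kleene decomposition, and the condition $P \subseteq g(Q)$ is a symmetric compatibility that precisely expresses $c(p) \notin Q$ for all $p \in P$ (and vice versa). Interpolation says we can always complete such a compatible pair to a common witness prime filter sitting inside both $g(P)$ and $g(Q)$.

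For the \emph{only if} direction (Nelson algebra implies interpolation), given $P,Q \in \mathcal{F}_p^+$ with $P \subseteq g(Q)$, I would construct
\[
F_0 = \{z \in A \mid (p \wedge q) \to z \in P \cap Q \text{ for some } p \in P,\ q \in Q\},
\]
and show $F_0$ is a proper filter containing $P \cup Q$ and contained in $g(P) \cap g(Q)$. Closure under meet relies on (N4), and the fact that two ``currying'' choices produce the same element relies crucially on (N5): $(p \wedge q) \to z = p \to (q \to z)$. Properness (equivalently, $0 \notin F_0$) is where the assumption $P \subseteq g(Q)$ enters: if $(p \wedge q) \to 0 \in P$ one derives $c(p \wedge q) \in P$, contradicting that $P$ lies in $g(Q)$ and hence avoids such negated elements. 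Once $F_0$ is a proper filter, an application of the prime filter extension lemma (Birkhoff--Stone) inside the bounded distributive lattice $g(P) \cap g(Q)$ yields the required prime filter $F$.

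For the \emph{if} direction (interpolation implies (N5)), I would argue by contraposition. Suppose (N5) fails at some triple $(a,b,c)$, so that the two sides of $(a \wedge b) \to c = a \to (b \to c)$ differ in one direction (the other inequality being derivable from (N1)--(N4)). By distributivity and the Birkhoff--Stone separation theorem, choose a prime filter $P^*$ containing the larger element but not the smaller. From $P^*$ extract two prime filters $P, Q \in \mathcal{F}_p^+$ tailored to $a$ and $b$ respectively, verify $P \subseteq g(Q)$ using the Kleene inequality, and then show that any prime filter $F$ interpolating between them would force $(a \wedge b) \to c \in P^*$ via the definition of $\to$ as a relative pseudocomplement and axioms (N2), (N3). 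This contradicts the choice of $P^*$.

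The main obstacle, and the technically delicate part, is the \emph{if} direction: manufacturing from a single failing instance of (N5) a compatible pair $(P,Q)$ of prime filters in $\mathcal{F}_p^+$ to which interpolation applies. One must carefully track how $\to$ is computed from the underlying Heyting relative pseudocomplement, and use the fact that in a quasi-Nelson algebra the operation $\to$ is determined once $c$ and $\Rightarrow$ are fixed so that prime-filter behaviour of $\to$ can be read off from ordinary Heyting prime filters. The \emph{only if} direction is more routine: its only subtle point is checking properness of $F_0$, but even there the compatibility assumption $P \subseteq g(Q)$ is designed exactly to make this work.
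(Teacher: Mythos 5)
The paper offers no proof of this statement: it is quoted verbatim as Theorem~3.5 of Cignoli \cite{Cign86}, so there is no internal argument to compare yours with. The only supporting fact the paper records is Lemma~\ref{Lem:Cignoli}, which reduces the interpolation property to the inequality $a \wedge b \nleq c(a) \vee c(b)$ for $a \in P$, $b \in Q$ with $P,Q\in\mathcal{F}_p^+$ and $P\subseteq g(Q)$; any honest proof of the theorem has to pass through something like that reduction.

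Your sketch has genuine gaps in both directions. In the ``only if'' direction, to obtain a prime filter $F$ with $P \cup Q \subseteq F \subseteq g(P) \cap g(Q)$ you must separate the filter generated by $P \cup Q$ from the \emph{ideal generated by} $c(P) \cup c(Q)$; mere properness of your $F_0$ (that is, $0 \notin F_0$) is far too weak, and invoking the prime filter theorem ``inside the bounded distributive lattice $g(P) \cap g(Q)$'' does not help, because a prime filter of that sublattice need not be (nor extend to) a prime filter of $A$ contained in $g(P)\cap g(Q)$. The correct disjointness condition is exactly the inequality of Lemma~\ref{Lem:Cignoli}, and the whole content of this direction is to derive $a\wedge b \nleq c(a)\vee c(b)$ from the Nelson axioms --- a step your construction of $F_0$ never addresses. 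The properness argument itself is also broken: from $(p\wedge q)\to 0\in P$ you cannot infer $c(p\wedge q)\in P$, since $p\wedge q$ need not lie in $P$ ($q$ is only known to lie in $Q$). In the ``if'' direction you explicitly defer the hard step --- manufacturing, from a single failing instance of (N5), a pair $P, Q \in \mathcal{F}_p^+$ with $P \subseteq g(Q)$ to which interpolation applies and yields a contradiction --- so that direction is a plan rather than a proof. As it stands the proposal cannot be accepted; a workable route is to prove both directions via the equivalent condition of Lemma~\ref{Lem:Cignoli}, as Cignoli does.
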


\begin{lemma}\cite[Lemma 2.2]{Cign86}\label{Lem:Cignoli} 
A Kleene algebra $\mathbb{A}$ has the interpolation property 
if and only if, for given $P,Q \in \mathcal{F}_p^+$ such that $P\subseteq g(Q)$,
\[ a \wedge b \nleq c(a) \vee c(b) \]
for all $a \in P$ and $b \in Q$.
\end{lemma}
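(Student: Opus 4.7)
The plan is to establish both directions using the Birkhoff--Stone prime filter separation theorem in the distributive lattice $A$, together with the identities $g(P) = A \setminus c(P)$ and $g(g(P)) = P$ on $\mathcal{F}_p$.

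For the forward implication, I would fix $P, Q \in \mathcal{F}_p^+$ with $P \subseteq g(Q)$ and apply the interpolation property to obtain a prime filter $F$ satisfying $P \cup Q \subseteq F \subseteq g(P) \cap g(Q)$. Suppose for contradiction that $a \wedge b \leq c(a) \vee c(b)$ for some $a \in P$ and $b \in Q$. Then $a,b \in F$ forces $a \wedge b \in F$, hence $c(a) \vee c(b) \in F$ by upward closure, and primality gives $c(a) \in F$ or $c(b) \in F$. However, $a \in P$ gives $c(a) \in c(P) = A \setminus g(P)$, so $c(a) \notin g(P) \supseteq F$; the case $c(b) \in F$ is ruled out symmetrically using $b \in Q$ and $F \subseteq g(Q)$.

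For the backward implication, given $P,Q \in \mathcal{F}_p^+$ with $P \subseteq g(Q)$, applying $g$ gives $Q = g(g(Q)) \subseteq g(P)$; together with $P \subseteq g(P)$ and $Q \subseteq g(Q)$ (since $P,Q \in \mathcal{F}_p^+$) this yields $P \cup Q \subseteq g(P) \cap g(Q)$. Let
\[
F_0 = \{ x \in A \mid p \wedge q \leq x \text{ for some } p \in P,\, q \in Q \}
\]
be the filter generated by $P \cup Q$ and
\[
I_0 = \{ x \in A \mid x \leq c(a) \vee c(b) \text{ for some } a \in P,\, b \in Q \}
\]
the ideal generated by $c(P) \cup c(Q)$. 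The crux of the proof is to check $F_0 \cap I_0 = \emptyset$: if $x$ lay in both, there would exist $p,a \in P$ and $q,b \in Q$ with $p \wedge q \leq x \leq c(a) \vee c(b)$; refining to $a' = a \wedge p \in P$ and $b' = b \wedge q \in Q$, the inequalities $c(a') \geq c(a)$ and $c(b') \geq c(b)$ give $a' \wedge b' \leq p \wedge q \leq c(a) \vee c(b) \leq c(a') \vee c(b')$, contradicting the hypothesis applied to the pair $(a', b')$. The Birkhoff--Stone separation theorem then yields a prime filter $F \supseteq F_0$ disjoint from $I_0$; disjointness translates to $F \cap (c(P) \cup c(Q)) = \emptyset$, that is, $F \subseteq g(P) \cap g(Q)$, while $F_0 \subseteq F$ provides $P \cup Q \subseteq F$, giving the interpolation property.

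The main obstacle is the disjointness $F_0 \cap I_0 = \emptyset$ in the backward direction. The subtlety is that the witnesses $a,b$ for membership in $I_0$ need not be directly usable with the hypothesis once combined with the witnesses $p,q$ for $F_0$; the trick is to pass to $a \wedge p$ and $b \wedge q$, which remain in the filters $P$ and $Q$ respectively and whose complements dominate $c(a)$ and $c(b)$, producing a configuration to which the hypothesis applies verbatim. The rest is bookkeeping with $g(P) = A \setminus c(P)$ and a standard invocation of prime filter separation.
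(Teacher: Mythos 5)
Your proof is correct. Note that the paper does not prove this lemma at all---it is quoted verbatim from Cignoli's paper \cite[Lemma 2.2]{Cign86}---so there is no in-paper argument to compare against; what you have written is essentially the standard (and, as far as I can tell, Cignoli's own) argument. Both directions check out: the forward direction correctly exploits $F \subseteq g(P) \cap g(Q) = A \setminus (c(P) \cup c(Q))$ together with primality of $F$, and in the backward direction the key disjointness $F_0 \cap I_0 = \emptyset$ is handled properly by the refinement $a' = a \wedge p$, $b' = b \wedge q$, which keeps the witnesses inside $P$ and $Q$ while only increasing the right-hand side $c(a) \vee c(b)$ to $c(a') \vee c(b')$; the prime filter separation theorem then delivers the interpolating filter $F$ with $P \cup Q \subseteq F \subseteq g(P) \cap g(Q)$.
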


We note that in case of a finite Kleene algebra, the interpolation property 
is equivalent to  condition:
\begin{tabbing}
(M) \ \=For any $p$ and $q$ in $\mathcal{J}$ such that $p^*,q^* \leq p,q$, 
there is $k$ in $\mathcal{J}$ such that 
\end{tabbing}
\[
p^*,q^* \leq k \leq p,q 
\]
(see \cite{Mont63a}). Moreover, the equivalence of these two conditions in 
case of Kleene algebras with an algebraic underlying lattice implicitly 
follows from \cite[Theorem~2]{Vaka77}. Since the approach and the terminology 
of \cite{Vaka77}  is notably different from ours, to avoid recalling
several notions from there and to make
our proofs self-consistent, below we present a direct proof.

\begin{proposition} \label{Prop:CharactNelson}
If\/ $\mathbb{A}$ is a Kleene algebra defined on an algebraic lattice, then
$\mathbb{A}$ has the interpolation property if and only if condition
{\rm (M)} is satisfied.
\end{proposition}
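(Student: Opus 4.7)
The plan is to prove both implications of the equivalence, with the reverse direction (from condition (M) to the interpolation property) being the substantive one.

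For the forward direction (interpolation $\Rightarrow$ (M)), let $p,q\in\mathcal{J}$ satisfy the hypothesis $p^*,q^*\leq p,q$ of (M). Then $[p),[q)\in\mathcal{F}_p^+$ (since $p^*\leq p$ and $q^*\leq q$), and $[p)\subseteq g([q))=[q^*)$ because $q^*\leq p$. Applying Lemma~\ref{Lem:Cignoli} with $P=[p)$, $Q=[q)$, $a=p$, $b=q$ yields $p\wedge q\nleq c(p)\vee c(q)$. Writing $p\wedge q=\bigvee J(p\wedge q)$, some $m\in J(p\wedge q)$ satisfies $m\nleq c(p)\vee c(q)$, and hence $m\nleq c(p)$ and $m\nleq c(q)$ (since $c(p),c(q)\leq c(p)\vee c(q)$). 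Combining the definition of $j^*$ with Lemma~\ref{Lem:StarProperties}(a) gives the characterization $y\geq j^* \Leftrightarrow y\nleq c(j)$ for each $j\in\mathcal{J}$, so $p^*,q^*\leq m\leq p\wedge q\leq p,q$, establishing (M).

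For the reverse direction ((M) $\Rightarrow$ interpolation), by Lemma~\ref{Lem:Cignoli} it suffices to show $a\wedge b\nleq c(a)\vee c(b)$ whenever $P,Q\in\mathcal{F}_p^+$, $P\subseteq g(Q)$, $a\in P$, and $b\in Q$. I would argue by contradiction: suppose $a\wedge b\leq c(a)\vee c(b)=c(a\wedge b)$, so that $a\wedge b\in A^-$. The strategy is to exhibit $j\in J(a)$ and $k\in J(b)$ verifying the (M) hypothesis; then (M) supplies $m\in\mathcal{J}$ with $j^*\vee k^*\leq m\leq j\wedge k$, whence $m\leq a\wedge b\leq c(a)\vee c(b)\leq c(j)\vee c(k)$ (as $j\leq a$ and $k\leq b$), while $m\geq j^*$ forces $m\nleq c(j)$, $m\geq k^*$ forces $m\nleq c(k)$, and join-primeness of $m$ in the distributive lattice rules out $m\leq c(j)\vee c(k)$, giving the desired contradiction.

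To extract $j$ and $k$, first note that $P\cap A^-=\emptyset$: any $x\in P\cap A^-$ would satisfy $c(x)\geq x\in P$, forcing $c(x)\in P\cap c(P)$ against $P\subseteq g(P)$. By Corollary~\ref{Cor:Kleene3}, $A^-=(\alpha]$, so $\alpha\notin P$ and $A^-$ is closed under arbitrary joins. Using Lemma~\ref{Lem:KleeneDivision}(b), split $J(a)=J^-(a)\cup J^{\geq}(a)$ with $J^-(a):=J(a)\cap\mathcal{J}^-\subseteq A^-$ and $J^{\geq}(a):=J(a)\setminus\mathcal{J}^-\subseteq\mathcal{J}^+\cup\mathcal{J}^*$, and set $a^+:=\bigvee J^{\geq}(a)$. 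Since $\bigvee J^-(a)\in A^-\setminus P$, primeness of $P$ applied to $a=\bigvee J^-(a)\vee a^+$ gives $a^+\in P$; symmetrically $b^+:=\bigvee J^{\geq}(b)\in Q$. If $b^+\leq c(a^+)$ then $b^+\in (c(a^+)]=c([a^+))\subseteq c(P)$, contradicting $Q\cap c(P)=\emptyset$; hence $b^+\nleq c(a^+)$. Via complete distributivity and the De~Morgan identity $c(a^+)=\bigwedge_{j\in J^{\geq}(a)}c(j)$, the inequality $b^+\nleq c(a^+)$ unfolds to the existence of $j\in J^{\geq}(a)$ and $k\in J^{\geq}(b)$ with $k\nleq c(j)$. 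These $j,k$ verify the (M) hypothesis: $j^*\leq j$ and $k^*\leq k$ because $j,k\in\mathcal{J}^+\cup\mathcal{J}^*$, while $j^*\leq k$ and $k^*\leq j$ both follow from $k\nleq c(j)$ (using the characterization above and the equivalence $k\leq c(j)\Leftrightarrow j\leq c(k)$). The main obstacle is precisely this reduction from the general filter-theoretic statement to completely join-irreducibles $j,k$, and it is resolved by exploiting that $A^-=(\alpha]$ is a principal ideal; this is the step where the algebraic hypothesis on $A$ is genuinely used.
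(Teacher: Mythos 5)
Your proof is correct and follows essentially the same route as the paper's: the forward direction is identical, and in the reverse direction you likewise invoke Lemma~\ref{Lem:Cignoli}, split $a$ and $b$ into their $\mathcal{J}^-$ and non-$\mathcal{J}^-$ parts, use primeness of the filters (together with $A^-$ being a principal ideal) to keep the latter parts inside $P$ and $Q$, extract $j,k$ with $k \nleq c(j)$, and apply (M). The only cosmetic difference is the final contradiction: you use join-primeness of the interpolating element $m$ to rule out $m \leq c(j) \vee c(k)$, whereas the paper deduces that both $k$ and $k^*$ would lie in $\mathcal{J}^-$, contradicting Lemma~\ref{Lem:KleeneDivision}(c); both endings are sound.
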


\begin{proof}($\Rightarrow$) Suppose that $\mathbb{A}$ has the interpolation property 
and that for some $p,q \in \mathcal{J}$, $p^*,q^* \leq p,q$ is satisfied. 
Then, for the prime filters $P = [p)$ and $Q = [q)$, we have
$g(P) = [p^*)$, $g(Q) = [q^*)$, and $P,Q \subseteq g(P),g(Q)$.
It is clear that $P,Q \in \mathcal{F}_p^+$, $p \in P$, and $q \in Q$.
Therefore, by Lemma~\ref{Lem:Cignoli},
\begin{equation}\label{EQ:difference} \tag{\dag}
 p \wedge q \nleq c(p) \vee c(q). 
\end{equation}
Because $A$ is an algebraic lattice and $\mathbb{A}$ is a De~Morgan algebra, each
element $x$ of $A$ is the join of completely join-irreducible
elements below $x$. Then \eqref{EQ:difference} implies that there exists 
a completely join-irreducible element $k \in \mathcal{J}$ such
that $k \leq p \wedge q$, but $k \nleq c(p) \vee c(q)$. Now clearly
$k \leq p,q$ and $k \nleq c(p),c(q)$, that is, $p^*,q^* \leq k$
and condition (M) holds.

\medskip\noindent%
($\Leftarrow$) Assume that condition (M) is satisfied, 
but $\mathbb{A}$ does not have the interpolation property. Then, by Lemma~\ref{Lem:Cignoli},
there are two prime filters $P$ and $Q$ satisfying $P \subseteq g(P)$,
$Q \subseteq g(Q)$, and $P \subseteq g(Q)$, as well as elements
$a \in P$ and $b \in Q$ such that
\[ a \wedge b \leq c(a) \vee c(b). \]
First, we show that for any $F \in \mathcal{F}_p^+$ 
and $x \in F$, the element
\[ 
 x^u = \bigvee \{ j \mid j \in J(x) \setminus \mathcal{J}^- \}
\]
belongs to $F$. For that, we also define the element
\[
 x^d = \bigvee \{ j \mid j \in J(x) \cap \mathcal{J}^- \}.
\]
Clearly, $x = x^u \vee x^d$. Since $x \in F$ and $F$ is a prime filter,
we have that $x^u \in F$ or $x^d \in F$. Now $x^d \in A^-$,
because $\mathcal{J}^- \subseteq A^-$ and $A^-$ 
is closed under any join, as it is a principal ideal of $A$, by
Corollary~\ref{Cor:Kleene3}. Thus, $x^d \leq c(x^d)$ by Lemma~\ref{Lem:KleeneVeeWedge}(d). 
If $x^d \in F$, then $c(x^d) \in F$ contradicting 
$c(x^d) \in c(F) = g(F)' \subseteq F'$. Therefore, we must
have $x^d \notin F$ and $x^u \in F$. 

Secondly, we prove that there exist $p \in J(a) \setminus \mathcal{J}^-$ and 
$q \in J(b) \setminus \mathcal{J}^-$ such that $p \nleq c(q)$.
If we assume that for all $p \in J(a) \setminus \mathcal{J}^-$ and 
$q \in J(b) \setminus \mathcal{J}^-$, $p \leq c(q)$ holds, then
\begin{align*}
a^u &= \bigvee \{ p \mid p \in J(a) \setminus \mathcal{J}^- \} \\
    &\leq \bigwedge \{ c(q) \mid q \in J(b) \setminus \mathcal{J}^- \} \\
    &= c \big ( \bigvee \{ q \mid q \in J(b) \setminus \mathcal{J}^- \} \big )\\
    &= c(b^u).
\end{align*}
As $P$ is a filter and $a^u \in P$, we obtain $c(b^u) \in P$, which contradicts
$c(b^u) \in c(Q) = g(Q)' \subseteq P'$. Thus, $p \nleq c(q)$ for
some $p \in J(a) \setminus \mathcal{J}^-$ and  $q \in J(b) \setminus \mathcal{J}^-$.

Finally,  we have $p^* \leq p$ and $q^* \leq q$, and $p \nleq c(q)$ implies $q^* \leq p$
by the definition of $q^*$. From this, by Lemma~\ref{Lem:StarProperties}, we also get 
$p^* \leq q^{**} = q$. By our original assumption, there exists an element $k \in \mathcal{J}$
such that 
\[ p^*,q^* \leq k \leq p,q .\]
Notice that this also directly implies $p^*,q^* \leq k^* \leq p,q $. 

The elements $a \in P$ and $b \in Q$ satisfy $a \wedge b \leq c(a) \vee c(b) = c(a \wedge b)$.
Because $p \leq a$ and $q \leq b$, 
\[ k,k^* \leq p \wedge q \leq a \wedge b \leq c(a \wedge b) \leq c(k),c(k^*).\]
This then means that both $k$ and $k^*$ are in $\mathcal{J}^-$.
But this is impossible by Lemma~\ref{Lem:KleeneDivision}(c). Therefore,
the interpolation property must hold.
\end{proof}

\section{ Nelson Algebras of Rough Sets Determined by Quasiorders} \label{Sec:RoughSetNelson}

In \cite{JRV09}, we proved that if $U$ is a non-empty set and $R$
is a quasiorder on $U$, then $RS$ is a complete sublattice of $\wp(U) \times \wp(U)$.
Since $\wp(U) \times \wp(U)$ is an algebraic, completely distributive lattice, this
implies that $RS$ is also an algebraic completely distributive lattice. Thus, $RS$
has the properties listed in Remark~\ref{Rem:CharacterazaionLatticesOfSets} and
\[
 \bigwedge_{i\in I} \mathcal{A}(X_{i}) = \Big ( \bigcap_{i\in I} X_{i}^\DOWN, \bigcap_{i\in I} X_{i}^\UP \Big )
\mbox{ \quad and \quad }
 \bigvee_{i\in I} \mathcal{A}(X_{i}) = \Big ( \bigcup_{i\in I} X_{i}^\DOWN, \bigcup_{i\in I} X_{i}^\UP \Big )
\]
for all $\{ \mathcal{A}(X_{i}) \mid i \in I \} \subseteq RS$. It is easy to observe
that $(\emptyset,\emptyset)$ is the least and $(U,U)$ is the greatest element of $RS$.
We also showed that the set of completely join-irreducible elements of $RS$ is
\begin{equation} \label{Eq:RS-JoinIrr}
  \mathcal{J} = \{(\emptyset,\{x\}^\UP) \mid \ 
|R(x)| \geq 2 \} \cup  \{( R(x), R(x)^\UP) \mid x \in U\}, \tag{$\star$}
\end{equation}
and that every element can be represented as a join of elements in $\mathcal{J}$.

In addition, we proved that the map
\[
c \colon RS \to RS, \, \mathcal{A}(X) \mapsto \mathcal{A}(X')
\]
is a De~Morgan complement, and therefore 
\[
 \mathbb{RS} = (RS,\vee,\wedge,c,(\emptyset,\emptyset), (U,U))
\]
is a De~Morgan algebra. Note that
\[
\mathcal{A}(X') = ((X^\prime)^\DOWN,(X^\prime)^\UP) = ((X^\UP)^\prime,(X^\DOWN)^\prime).
\]

Additionally, $\{x\}^\UP = \{y \in U \mid y \, R \, x \} = R^{-1}(x)$ for all $x \in U$.
Because $R$ is reflexive, $X^\DOWN \subseteq X \subseteq X^\UP$, and
transitivity means $X^{\UP \UP} \subseteq X^\UP$ and $X^\DOWN \subseteq X^{\DOWN \DOWN}$
for all $X \subseteq U$. In fact, $^\UP \colon \wp(U) \to \wp(U)$ is a topological 
closure operator, and $^\DOWN \colon \wp(U) \to \wp(U)$ is a topological interior operator;
see \cite{Jarv07}.

\begin{lemma} \label{Lem:RSKleene}
If $R$ is a quasiorder on a non-empty set $U$, then $\mathbb{RS}$ is a quasi-Nelson algebra.
\end{lemma}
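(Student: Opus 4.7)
The plan is to reduce the claim to verifying the single Kleene inequality $x\wedge c(x)\le y\vee c(y)$ on $\mathbb{RS}$, and then invoke the observation from Section~\ref{Sec:Preliminaries} that any Kleene algebra whose underlying lattice is a Heyting algebra is automatically a quasi-Nelson algebra. The preceding summary already records that $\mathbb{RS}$ is a De~Morgan algebra and that $RS$ is an algebraic completely distributive lattice (hence a Heyting algebra), so the only thing remaining is the Kleene axiom.

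For the Kleene inequality, I would just translate everything into coordinates. Given $\mathcal{A}(X),\mathcal{A}(Y)\in RS$, using the explicit formulas for $\wedge$, $\vee$, and $c(\mathcal{A}(X))=\mathcal{A}(X')=((X^\UP)',(X^\DOWN)')$, compute
\[
\mathcal{A}(X)\wedge c(\mathcal{A}(X)) \;=\; \bigl(X^\DOWN\cap (X^\UP)',\; X^\UP\cap (X^\DOWN)'\bigr),
\]
\[
\mathcal{A}(Y)\vee c(\mathcal{A}(Y)) \;=\; \bigl(Y^\DOWN\cup (Y^\UP)',\; Y^\UP\cup (Y^\DOWN)'\bigr).
\]
Since $R$ is reflexive, $Z^\DOWN\subseteq Z^\UP$ for every $Z\subseteq U$. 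Applying this to $X$ gives $X^\DOWN\cap (X^\UP)'=\emptyset$, so the first-coordinate inclusion holds trivially. Applying it to $Y$ gives $(Y^\UP)'\subseteq (Y^\DOWN)'$, whence $Y^\UP\cup (Y^\DOWN)'=U$, making the second-coordinate inclusion trivial as well. Thus $\mathcal{A}(X)\wedge c(\mathcal{A}(X)) \le \mathcal{A}(Y)\vee c(\mathcal{A}(Y))$, and the Kleene law is established.

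Once $\mathbb{RS}$ is a Kleene algebra on the algebraic lattice $RS$, the remark in Section~\ref{Sec:Preliminaries} (that any Kleene algebra defined on an algebraic lattice is a quasi-Nelson algebra, because algebraic lattices are Heyting algebras and the relative pseudocomplement $a\Rightarrow(c(a)\vee b)$ exists) immediately yields the lemma.

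I do not expect a real obstacle here: the whole argument hinges on the single observation that reflexivity of $R$ forces $X^\DOWN\subseteq X^\UP$ for all $X$, which makes both coordinates of the Kleene inequality degenerate. The structural facts about $RS$ (complete distributivity, algebraicity, De~Morgan operation) have all been established in the cited prior work, so only the coordinate-wise verification needs to be written out.
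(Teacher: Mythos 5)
Your proposal is correct and follows essentially the same route as the paper: reduce to verifying the Kleene inequality (since $RS$ is completely distributive, hence a Heyting algebra) and then compute $\mathcal{A}(X)\wedge c(\mathcal{A}(X))$ and $\mathcal{A}(Y)\vee c(\mathcal{A}(Y))$ coordinatewise, using reflexivity to see that the first coordinate of the former is $\emptyset$ and the second coordinate of the latter is $U$. This matches the paper's proof of Lemma~\ref{Lem:RSKleene} step for step.
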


\begin{proof}
Since $RS$ is a completely distributive lattice, we have only to show that $\mathbb{RS}$ is
a Kleene algebra. Let $x = (X^\DOWN,X^\UP)$ and $y = (Y^\DOWN,Y^\UP)$. Then,
\begin{align*}
x \wedge c(x) &= (X^\DOWN \cap (X^\UP)^\prime , X^\UP \cap (X^\DOWN)^\prime ) = (\emptyset,X^\UP \setminus X^\DOWN); \\
y \vee c(y) &= (Y^\DOWN \cup (Y^\UP)^\prime, Y^\UP \cup  (Y^\DOWN)^\prime ) = ((Y^\UP \setminus Y^\DOWN)',U).
\end{align*}
Hence, the condition $x \wedge c(x) \leq y \vee c(y)$ is satisfied.
\end{proof}

Our next lemma presents some properties of the completely join-irreducible elements of $RS$.
The set $\mathcal{J}$ is defined as in \eqref{Eq:RS-JoinIrr}.

\begin{lemma} \label{Lem:RS-Star}
If $R$ is a quasiorder on  a non-empty set $U$, then the following assertions hold:
\begin{enumerate}[\rm (a)]
\item $\mathcal{J}^- = \{(\emptyset,\{x\}^\UP) \mid \ |R(x)| \geq 2 \}$;
\item $(\emptyset,\{x\}^\UP)^* = (R(x),R(x)^\UP)$ for all $(\emptyset,\{x\}^\UP) \in \mathcal{J}^-$;
\item $\mathcal{J}^+ = \{( R(x), R(x)^\UP) \mid \ |R(x)| \geq 2 \}$;
\item $\mathcal{J}^* = \{( \{x\}, \{x\}^\UP) \mid \ R(x) = \{x\} \, \}$.
\end{enumerate}
\end{lemma}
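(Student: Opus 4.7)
The plan is to prove the four parts in order, exploiting the disjoint decomposition $\mathcal{J} = \mathcal{J}^- \cup \mathcal{J}^* \cup \mathcal{J}^+$ and the explicit description \eqref{Eq:RS-JoinIrr} of the completely join-irreducible elements, together with Lemmas~\ref{Lem:KleeneVeeWedge}(d), \ref{Lem:KleeneDivision}(b), and \ref{Lem:KleeneDivision}(c).

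For (a), I would use Lemma~\ref{Lem:KleeneDivision}(b) to rewrite $\mathcal{J}^- = \mathcal{J} \cap RS^-$ and Lemma~\ref{Lem:KleeneVeeWedge}(d) to test membership in $RS^-$ by the simple criterion $j \leq c(j)$. For the two families in \eqref{Eq:RS-JoinIrr}, a direct computation of $c$ via $\mathcal{A}(X') = ((X^\UP)', (X^\DOWN)')$ shows that $(\emptyset, \{x\}^\UP) \leq c((\emptyset, \{x\}^\UP))$ always holds, while $(R(x), R(x)^\UP) \nleq c((R(x), R(x)^\UP))$ because reflexivity places $x$ in both $R(x)$ and $R(x)^\UP$, so $x \notin (R(x)^\UP)'$.

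For (b), I would compute $j^* = \bigwedge \{ y \in RS \mid y \nleq c(j) \}$ directly for $j = (\emptyset, \{x\}^\UP)$. The crucial step is to show that any $y = (Y^\DOWN, Y^\UP) \nleq c(j)$ must have some $z \in Y^\DOWN$ with $z \, R \, x$; at that point transitivity yields $R(x) \subseteq R(z) \subseteq Y$, and the identity $R(x)^\DOWN = R(x)$ (combined with monotonicity of $^\UP$) lifts this set inclusion to $(R(x), R(x)^\UP) \leq y$. Reflexivity then shows that $(R(x), R(x)^\UP)$ itself belongs to the set (take $z = x$), so it is simultaneously a lower bound and a member of the set, and hence coincides with the meet.

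By Lemma~\ref{Lem:KleeneDivision}(c), the map $^*$ sends $\mathcal{J}^-$ bijectively onto $\mathcal{J}^+$, so (c) follows from (a) and (b). For (d), the disjoint union $\mathcal{J} = \mathcal{J}^- \cup \mathcal{J}^* \cup \mathcal{J}^+$ combined with (a), (c), and \eqref{Eq:RS-JoinIrr} leaves precisely the elements $(R(x), R(x)^\UP)$ with $R(x) = \{x\}$. The main obstacle is the argument in (b): one has to identify the minimum of $\{ y \in RS \mid y \nleq c(j) \}$ explicitly, and the nontrivial content is that transitivity upgrades the local condition $x \in R(z) \subseteq Y$ to the global inclusion $R(x) \subseteq Y$ needed for coordinatewise containment.
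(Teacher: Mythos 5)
Your proposal is correct and follows essentially the same route as the paper: identify $\mathcal{J}^-$ via $\mathcal{J}\cap RS^-$ and the test $j\leq c(j)$, compute $(\emptyset,\{x\}^\UP)^*$ using transitivity to get $R(x)\subseteq R(z)$, and obtain (c) and (d) from the bijection $\mathcal{J}^-\to\mathcal{J}^+$ and elimination. The only (harmless) variation is in (b), where you compute the meet of $\{y\in RS\mid y\nleq c(j)\}$ directly and exhibit $(R(x),R(x)^\UP)$ as its minimum, whereas the paper first uses Lemma~\ref{Lem:KleeneDivision}(c) to place $j^*$ in $\mathcal{J}^+$ and hence in the second family of \eqref{Eq:RS-JoinIrr} before pinning it down.
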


\begin{proof} (a) Let $j = (X^\DOWN,X^\UP) \in \mathcal{J}^-$. Then, 
$j \leq c(j)$ and $j = j \wedge c(j) = (\emptyset,X^\UP \setminus X^\DOWN)$. This implies
$j = (\emptyset,\{x\}^\UP)$ for some $x$ such that $|R(x)| \geq 2$. Conversely, if
$j = (\emptyset,\{x\}^\UP)$, then $c(j) = (U \setminus \{x\}^\UP,U)$ implying
$j \leq c(j)$ and $j \in \mathcal{J}^-$.

(b) Let $x \in U$ be such that $|R(x)| \geq 2$. Then,
$j = (\emptyset,\{x\}^\UP)  \in \mathcal{J}^-$ and by Lemma~\ref{Lem:KleeneDivision}(c),
$j^* \in \mathcal{J}^+$ and therefore we must have $j^* = (R(y),R(y)^\UP)$ for some $y \in U$.
Because $j^*$ is the least element not included in $c(j) = (U \setminus \{x\}^\UP,U)$,
we have $R(y) \nsubseteq U \setminus \{x\}^\UP$, that is, 
$R(y) \cap \{x\}^\UP \neq \emptyset$. Thus, there exists $z \in U$
such that $y \, R \, z$ and $z \, R \, x$ implying $y \, R \, x$.
This gives $R(x) \subseteq R(y)$ and $R(x)^\UP \subseteq R(y)^\UP$.
Hence,  $(R(x),R(x)^\UP) \leq (R(y),R(y)^\UP)=j^*$. As 
$(R(x),R(x)^\UP)  \nleq c(j)$, we get $(R(x),R(x)^\UP)=j^*$.

(c) is now obvious by (a), (b), and Lemma~\ref{Lem:KleeneDivision}. 

(d) If $j = j^*$, then by \eqref{Eq:RS-JoinIrr}, (a), and (c) we must have 
$j = (R(x),R(x)^\UP) = (\{x\},\{x\}^\UP)$ for some $x \in U$
such that $R(x) = \{x\}$.

Conversely, suppose $j = (\{x\},\{x\}^\UP)$ for some $x$ such that 
$R(x) = \{x\}$. Then, $j \notin \mathcal{J}^-$
and, by Lemma~\ref{Lem:KleeneDivision}(a), either $j^* \in \mathcal{J}^-$
or $j^* \in \mathcal{J}^*$. Clearly, $j^* \in \mathcal{J}^-$ is not possible,
since $R(x) = \{x\}$. 
\end{proof}

\begin{proposition} \label{Prop:RoughNelson}
If $R$ is a quasiorder on a non-empty set $U$, then $\mathbb{RS}$ is a Nelson algebra
such that the underlying lattice $RS$ is algebraic.
\end{proposition}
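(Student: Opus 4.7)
The algebraicity of $RS$ is already recorded at the top of Section~\ref{Sec:RoughSetNelson}, and Lemma~\ref{Lem:RSKleene} establishes that $\mathbb{RS}$ is a quasi-Nelson algebra. So the plan is to upgrade ``quasi-Nelson'' to ``Nelson.'' By Theorem~\ref{Thm:Interpolation} it suffices to verify the interpolation property, and since $RS$ is algebraic, Proposition~\ref{Prop:CharactNelson} reduces this further to checking condition (M) on the completely join-irreducible elements, described explicitly in \eqref{Eq:RS-JoinIrr} and Lemma~\ref{Lem:RS-Star}.

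To verify (M), take $p,q\in\mathcal{J}$ with $p^{*},q^{*}\leq p,q$. The inequality $p^{*}\leq p$ forces $p\notin\mathcal{J}^{-}$, so by Lemma~\ref{Lem:RS-Star}(a), (c), (d) we have $p=(R(x),R(x)^{\UP})$ for some $x\in U$, and similarly $q=(R(y),R(y)^{\UP})$ for some $y\in U$. The main case is $p,q\in\mathcal{J}^{+}$, where Lemma~\ref{Lem:RS-Star}(b) gives $p^{*}=(\emptyset,\{x\}^{\UP})$ and $q^{*}=(\emptyset,\{y\}^{\UP})$. The hypothesis $p^{*}\leq q$ then reads $\{x\}^{\UP}\subseteq R(y)^{\UP}$; since $x\in\{x\}^{\UP}$ by reflexivity, this yields $R(x)\cap R(y)\neq\emptyset$, so we may pick $w\in R(x)\cap R(y)$, i.e.\ $xRw$ and $yRw$.

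The candidate interpolant is $k=(R(w),R(w)^{\UP})$, which lies in $\mathcal{J}$ by \eqref{Eq:RS-JoinIrr}. Transitivity gives $R(w)\subseteq R(x)\cap R(y)$ and hence $R(w)^{\UP}\subseteq R(x)^{\UP}\cap R(y)^{\UP}$, so $k\leq p$ and $k\leq q$. For $p^{*}\leq k$ one checks $\{x\}^{\UP}\subseteq R(w)^{\UP}$: if $zRx$ then $zRw$ by transitivity (since $xRw$), and together with $wRw$ this places $z$ in $R(w)^{\UP}$; the inclusion $q^{*}\leq k$ is symmetric. The remaining cases, when $p$ or $q$ lies in $\mathcal{J}^{*}$ (so $R(x)=\{x\}$ or $R(y)=\{y\}$), collapse: the hypothesis forces $x$ (resp.\ $y$) to be a common point, and we can take $k$ to be whichever of $p,q$ is in $\mathcal{J}^{*}$, or $k=p=q$ if both are.

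The only real subtlety is correctly reading off $p^{*}$ and $q^{*}$ from Lemma~\ref{Lem:RS-Star} and, in the $\mathcal{J}^{*}$ cases, recognising that the ``two halves'' of $p^{*}\leq q$ force enough comparability to make the interpolant degenerate to a known point; the generic $\mathcal{J}^{+}$ case is the heart of the argument and runs purely on reflexivity and transitivity of $R$.
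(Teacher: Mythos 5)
Your proof is correct and follows essentially the same route as the paper: reduce to condition (M) via Theorem~\ref{Thm:Interpolation} and Proposition~\ref{Prop:CharactNelson}, dispose of the degenerate cases $p=p^*$ or $q=q^*$ by taking $k=p$ or $k=q$, and in the main case extract a common $R$-successor $w$ of $x$ and $y$ from $p^*\leq q$. The only difference is the choice of interpolant: you take $k=(R(w),R(w)^\UP)$ uniformly, whereas the paper splits on whether $|R(w)|\geq 2$ or $R(w)=\{w\}$ and uses $(\emptyset,\{w\}^\UP)$ in the former case; both choices satisfy $p^*,q^*\leq k\leq p,q$, and yours has the minor advantage of avoiding that case split.
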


\begin{proof}
By Theorem~\ref{Thm:Interpolation} and  Proposition~\ref{Prop:CharactNelson},
we have to show only that the quasi-Nelson algebra $\mathbb{RS}$ 
has property (M). Let $p,q \in \mathcal{J}$, where $\mathcal{J}$ is defined
as in \eqref{Eq:RS-JoinIrr}, and suppose that 
\[
p^*,q^* \leq p,q.  
\]
We will show that in this case there exists an element $k \in \mathcal{J}$
such that 
\[
p^*,q^* \leq k \leq p,q.  
\]
We may exclude the cases $p = p^*$ and $q = q^*$, because they
imply directly $k = p$ or  $k = q$.

Now, $p^*, q^* < p,q$ implies $p,q \in \mathcal{J}^+$. 
Hence,  $p = (R(x),R(x)^\UP)$ and $q = (R(y),R(y)^\UP)$
for some elements $x,y\in U$ such that $|R(x)|,|R(y)| \geq 2$.
By Lemma~\ref{Lem:RS-Star}(b), 
$p^* =(\emptyset ,\{x\}^\UP)$ and $q^* =(\emptyset ,\{y\}^\UP)$.
Then, $p^* \leq q$ gives $x \in \{x\}^\UP \subseteq R(y)^\UP$.
Hence, there exists an element $z\in U$ such that $y \, R \,z$ and 
$x \, R \, z$. We have to consider two cases: (i) $|R(z)| \geq 2$ and 
(ii) $R(z) =\{z\}$.

(i) Assume that $|R(z)| \geq 2$. Then clearly, 
$k=(\emptyset ,\{z\}^\UP)$ is a completely join-irreducible
element. Observe that  $x, y \in \{z\}^\UP$
implies $\{x\}^\UP, \{y\}^\UP \subseteq \{z\}^\UP$, 
whence we obtain $p^* \leq k$ and $q^* \leq k$. Since $z \in R(x)$ and 
$z \in R(y)$, we get also $\{z\}^\UP \subseteq R(x)^\UP$ and
$\{z\}^\UP \subseteq R(y)^\UP$ implying $k \leq p$ and $k \leq q$.

(ii) Suppose $R(z) = \{z\}$. Then $k =(\{z\},\{z\}^\UP)$ is a completely 
join-irreducible element. Because  $x \, R \,z$ and 
$y \, R \, z$, we obtain
\begin{align*}
 p^* = (\emptyset ,\{x\}^\UP) & \leq (\{z\},\{z\}^\UP)=k, \\
 q^* = (\emptyset,\{y\}^\UP) & \leq (\{z\},\{z\}^\UP)=k, \\ 
 k = (\{z\},\{z\}^\UP) & \leq (R(x),R(x)^\UP) = p, \mbox{ and} \\
 k = (\{z\},\{z\}^\UP) & \leq (R(y),R(y)^\UP) = q.
\end{align*}
Hence, $p^{\ast },q^{\ast }\leq k\leq p,q$ is satisfied in both cases (i) and (ii).
\end{proof}

\begin{example} \label{Exa:Chain}
For any binary relation $R$ on $U$, a set $C \subseteq U$ is called a \emph{connected component}
of $R$, if $C$ is an equivalence class of the smallest equivalence relation 
containing $R$.
In \cite{JRV09}, we presented a decomposition theorem stating that for any
left-total relation,
\[ RS \cong \prod_{C \in \mathfrak{Co}} RS(C),\]
where $\mathfrak{Co}$ is the set of connected components of $R$ and $RS(C)$
is the set of rough sets on the component $C$ determined by the restriction
of $R$ to the set $C$. Note that $R$ is said to be \emph{left-total} if for any $x$,
there exists $y$ such that $x \, R \, y$.

For any equivalence, the connected components are just equivalence classes. If an
equivalence class $C$ consists of a single element, say $a$, then
$RS(C) = \{ (\emptyset,\emptyset), (\{a\}, \{a\})$, and  
$RS(C) = \{ (\emptyset,\emptyset), (\emptyset,C), (C,C) \}$ in
case $|C| \geq 2$.
This then means that $RS$ is isomorphic to the direct product of chains
of two and three elements.

In case of quasiorders, the height of components cannot be limited.
Let us consider the following simple case. Assume that
$U = \{1,2,\ldots,n\}$ is a set of $n$ consecutive natural numbers and
consider its usual order $\leq$.
For any $X \subseteq U$, $X^\UP = \{1,\ldots,i\}$, where
$i$ is the maximal element of $X$ and $X^\DOWN = \{j,\ldots,n\}$,
where $j$ is the least $j$ such that $\{j,\ldots,n\} \subseteq X$. 
All elements of $U$ belong to the same component and
$RS$ has the members: 
\begin{align*}
\mathcal{A}(\{1,\ldots,n\}) & = (\{1,\ldots,n\},U); \\
                           & \ \, \vdots \\
\mathcal{A}(\{n-1,n\}) & = (\{n-1,n\},U); \\
\mathcal{A}(\{n\}) & = (\{n\},U); \\
\mathcal{A}(\{n-1\}) & = (\emptyset,\{1,\ldots,n-1\}); \\
                     & \ \, \vdots \\
\mathcal{A}(\{1\}) & = (\emptyset,\{1\}); \\
\mathcal{A}(\emptyset) & = (\emptyset,\emptyset). 
\end{align*}
Thus, $RS$ forms a chain of $2n$ elements.
\end{example}

We conclude this section by considering three-valued {\L}ukasiewicz algebras.
It is known that three-valued {\L}ukasiewicz algebras coincide with the
semisimple Nelson algebras --- see \cite{Cign07}, where further references 
can be found. 

Following A.~Monteiro \cite{Mont63}, we can define a \emph{three-valued 
{\L}ukasiewicz algebra} as an algebra $(A,\vee,\wedge,c,\Delta,1)$ such that 
$A$ is a distributive lattice and the following equations
are satisfied:
\begin{enumerate}[({\L}1)]
\item $c(c(x)) = x$;
\item $c(x \wedge y) = c(x) \vee c(y)$;
\item $c(x) \vee \Delta(x) = 1$;
\item $x \wedge c(x) = c(x) \wedge \Delta(x)$;
\item $\Delta(x \wedge y) = \Delta(x) \wedge \Delta(y)$.
\end{enumerate}

\begin{proposition} \label{Prop:Luka}
If $R$ is a quasiorder, then the rough set lattice $\mathbb{RS}$ is a 
three-valued {\L}ukasiewicz algebra if and only if $R$ is an equivalence.
\end{proposition}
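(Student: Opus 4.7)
The plan is to prove both directions by exploiting the uniqueness of the $\Delta$ operation in a three-valued {\L}ukasiewicz algebra defined on a distributive lattice. For the easy direction ($R$ equivalence $\Rightarrow$ $\mathbb{RS}$ is three-valued {\L}ukasiewicz), I would simply invoke the classical result of Pagliani \cite{Pagliani97} already cited in the introduction; alternatively one can verify directly that the assignment $\Delta\mathcal{A}(X) := (X^\UP,X^\UP)$ is well-defined on $RS$ (because, when $R$ is an equivalence, $X^\UP$ is a union of $R$-classes and hence is exact), and then mechanically check that $\Delta$ together with $c$ satisfies ({\L}1)--({\L}5).

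For the reverse direction, I would argue by contraposition. Suppose that the quasiorder $R$ is not an equivalence; since $R$ is already reflexive and transitive, it must fail to be symmetric, so we can pick $a,b \in U$ with $a\,R\,b$ but not $b\,R\,a$. Note that $a\neq b$, $b \in R(a)$, and $b \notin R^{-1}(a) = \{a\}^\UP$. Take the test element $x := \mathcal{A}(\{a\})$. Using the formulas, $\{a\}^\DOWN = \emptyset$ (because $R(a) \ni b \notin \{a\}$), so $x = (\emptyset,R^{-1}(a))$, and consequently $c(x) = (U\setminus R^{-1}(a),U)$. Observe that $x \leq c(x)$, so $x\wedge c(x) = x$.

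Now assume $\mathbb{RS}$ were a three-valued {\L}ukasiewicz algebra, witnessed by some operation $\Delta$. Axioms ({\L}3) and ({\L}4) give the simultaneous equations
\[
c(x) \vee \Delta(x) = (U,U), \qquad c(x) \wedge \Delta(x) = x \wedge c(x) = (\emptyset, R^{-1}(a)).
\]
Since $RS$ is distributive, these force $\Delta(x)$ to be uniquely determined; writing $\Delta(x) = (A,B)$ and computing coordinatewise, the first equation yields $A \supseteq R^{-1}(a)$, and the second yields $A \subseteq R^{-1}(a)$ and $B = R^{-1}(a)$. Hence $\Delta(x)$ must equal $(R^{-1}(a),R^{-1}(a))$.

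The final step is to show that this element is not in $RS$. If $(R^{-1}(a),R^{-1}(a)) = (Y^\DOWN,Y^\UP)$ for some $Y$, then $Y^\DOWN \subseteq Y \subseteq Y^\UP$ forces $Y = R^{-1}(a)$ and hence $R^{-1}(a)^\DOWN = R^{-1}(a)$. Since $a \in R^{-1}(a)$, this in turn requires $R(a) \subseteq R^{-1}(a)$, which is contradicted by $b \in R(a) \setminus R^{-1}(a)$. Thus no admissible $\Delta$ exists, and $\mathbb{RS}$ is not a three-valued {\L}ukasiewicz algebra. The main obstacle is the last step: recognising that, although checking individual {\L}ukasiewicz axioms could be delicate, the asymmetry of $R$ manifests itself as a genuine non-exactness of the set $R^{-1}(a)$, which is precisely the set forced upon $\Delta(x)$ by ({\L}3) and ({\L}4).
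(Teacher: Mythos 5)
Your proof is correct and follows essentially the same route as the paper: the forward direction is handled identically (citing Pagliani), and for the converse both arguments test the same element $(\emptyset,\{a\}^\UP)$ arising from an asymmetric pair against axioms ({\L}3) and ({\L}4). The only organizational difference is that you use both axioms at once to pin down $\Delta(x)=(R^{-1}(a),R^{-1}(a))$ and then show this pair is not a rough set, whereas the paper first uses ({\L}4) and the relational structure to force $\Delta(j)=j$ within $RS$ and then observes that ({\L}3) fails; these are equivalent ways of extracting the same contradiction.
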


\begin{proof} 
If $R$ is an equivalence, then $\mathbb{RS}$ is a three-valued {\L}ukasiewicz algebra 
such that $\Delta(\mathcal{A}(X)) = (X^\UP,X^\UP)$ as shown by P.~Pagliani \cite{Pagliani97,PagChak08}.

Conversely, suppose by contradiction
that $\mathbb{RS}$ is a three-valued {\L}ukasiewicz algebra, but the
quasiorder $R$ is not symmetric. Then, there
exist $x,y \in U$ such that $(x,y) \in R$, but $(y,x) \notin R$. 
Now $|R(x)| \geq 2$, $j = (\emptyset,\{x\}^\UP) \in \mathcal{J}^-$,
and $j \leq c(j)$. For $j$, there exists the element $\Delta(j)$ satisfying ({\L}4), that is,
$c(j) \wedge \Delta(j) = j \wedge c(j) = j$. From this we obtain
\[
(U \setminus \{x\}^\UP, U) \wedge \Delta(j) = (\emptyset,\{x\}^\UP).
\]
This means that $\Delta(j) = (Y^\DOWN,Y^\UP)$, where $Y^\UP = \{x\}^\UP$ and
$Y^\DOWN \subseteq \{x\}^\UP$. Assume that $Y^\DOWN \ne \emptyset$. Then,
there exists $z \in Y^\DOWN \subseteq \{x\}^\UP$. 
So, $z \, R \,x$, and now $x \,  R \, y$ implies $z \, R \, y$, that is, $y \in R(z)$.
Since $z \in Y^\DOWN$, we have $y \in R(z) \subseteq Y \subseteq Y^\UP = \{x\}^\UP$.
But $y \in \{x\}^\UP$ is not possible, since $(y,x) \notin R$. 
So, we must have  $Y^\DOWN = \emptyset$ and $\Delta(j) = (\emptyset, \{x\}^\UP) = j$. Then,
\[ c(j) \vee \Delta(j) = c(j) \vee j = c(j) = (U \setminus \{x\}^\UP,U) \ne (U,U).\]
This contradicts ({\L}3) and therefore $\mathbb{RS}$ is not a three-valued {\L}ukasiewicz algebra.
\end{proof}

\section{Proof of the Representation Theorem} \label{Sec:Representation}

Next, we present the proof of  Theorem~\ref{Thm:MAIN}.
Let $\mathbb{A}$ be a Nelson algebra such that its underlying lattice $A$
is algebraic. We denote by $\mathcal{J}$ the set of
completely join-irreducible elements of $A$. We define a mapping 
$\rho \colon \mathcal{J} \to \mathcal{J}^- \cup \mathcal{J}^*$.
For all $j \in \mathcal{J}$, let
\[
\rho(j) = \left \{
\begin{array}{ll}
j    & \mbox{ if $j \in \mathcal{J}^- \cup \mathcal{J}^*$,}\\
j^*  & \mbox{ otherwise.}
\end{array}
\right .
\]
Notice that $\rho(\rho(j)) = \rho(j)$ and 
$\rho(j) = \rho(j^*)$ for all $j \in \mathcal{J}$.

\bigskip

\paragraph{\it Proof of Theorem~\ref{Thm:MAIN}} \
Let us set $U = \mathcal{J}$ and define a binary relation
$R$ on $U$ by
\[ 
  x \, R \, y \iff \rho(x) \leq \rho(y).
\]
Then, $R$ is a quasiorder on $U$. Observe
that $x \, R \, x^*$ and $x^* \, R \, x$ for all $x \in U$.
Thus, $R(x) = R(x^*)$ and $\{x\}^\UP = \{x^*\}^\UP$ for every $x \in U$.
If $x \in \mathcal{J}^- \cup \mathcal{J}^+$, then $|R(x)| \geq 2$,
because $x \ne x^*$. If $x \in \mathcal{J}^*$, then $R(x) = \{x\}$.
Namely, if $x \in \mathcal{J}^*$ and $x \, R \, y$, then
$x = \rho(x) \leq \rho(y)$. Now $\rho(y)^* \leq x^* = x \leq \rho(y)$
implies $\rho(y) \in \mathcal{J}^*$ since $\rho(y) \in \mathcal{J}^+$ is 
not possible. So, $\rho(y)  = \rho(y)^*$, which
implies $y \in \mathcal{J}^*$ by the definition of $\rho$.
Now, $x \leq y$ and $y \leq x$ yield $y = x$.

The algebra $\mathbb{RS}$ of rough sets determined by the relation $R$ is a
Nelson algebra such that the underlying lattice $RS$ is an algebraic lattice
by Proposition~\ref{Prop:RoughNelson}. Let $\mathcal{J}(RS)$
denote the sets of completely join-irreducible elements of $RS$.
We show first that $\mathcal{J}$ and $\mathcal{J}(RS)$ are order-isomorphic.

Let us define a mapping $\varphi \colon \mathcal{J} \to \mathcal{J}(RS)$
by setting
\[
\varphi(x) = \left \{
\begin{array}{ll}
(\emptyset,\{x\}^\UP) & \mbox{ if $x \in \mathcal{J}^-$;} \\
(R(x),R(x)^\UP)       & \mbox{ otherwise.}
\end{array}
\right .
\]
The map $\varphi$ is well-defined. Namely, if $x \in \mathcal{J}^-$, then
$|R(x)| \geq 2$ and 
$$\varphi(x) = (\emptyset,\{x\}^\UP)\in \mathcal{J}(RS)^-.$$ 
If $x \in \mathcal{J}^+$, then also $|R(x)| \geq 2$, and 
$$\varphi(x) = (R(x),R(x)^\UP) \in \mathcal{J}(RS)^+.$$ 
For any $x \in \mathcal{J}^*$, $R(x) = \{x\}$ gives 
$$\varphi(x) = (R(x),R(x)^\UP) = (\{x\},\{x\}^\UP) \in \mathcal{J}(RS)^*$$ (cf. Lemma~\ref{Lem:RS-Star}).

We show that $\varphi$ is an order-embedding. The proof is divided into four cases:
\begin{enumerate}[\rm (i)]
\item $x \in \mathcal{J}^-$ and $y \in \mathcal{J}^-$;
\item $x \in \mathcal{J}^-$ and $y \notin \mathcal{J}^-$;
\item $x \notin \mathcal{J}^-$ and $y \notin \mathcal{J}^-$;
\item $x \notin \mathcal{J}^-$ and $y \in \mathcal{J}^-$.
\end{enumerate}

(i) Let $x, y \in \mathcal{J}^-$. Then, $\rho(x) = x$ and $\rho(y) = y$.
Now $x \leq y$ implies $\rho(x) \leq \rho(y)$ and $x \, R \, y$. So,
$\{x\}^\UP \subseteq \{y\}^\UP$ and $\varphi(x) = (\emptyset,\{x\}^\UP) \leq
(\emptyset,\{y\}^\UP) ) = \varphi(y)$.
Conversely, if $\varphi(x) \leq \varphi(y)$, then $x \in \{x\}^\UP \subseteq
\{y\}^\UP$ and $x \, R \, y$. Thus, $x = \rho(x) \leq \rho(y) = y$.

(ii) Let $x \in \mathcal{J}^-$ and $y \notin \mathcal{J}^-$. Then,
$\rho(x) = x$ and $\rho(y) = y^*$. If $x \leq y$, then
$y^* \leq x^*$ and we have $x,y^* \leq x^*$ and
$x,y^* \leq y^{**} = y$. Thus, $x^{**}, y^* \leq x^*,y$,
which implies by Proposition~\ref{Prop:CharactNelson} that
there exists $z \in \mathcal{J}$ such that
$x, y^* \leq z,z^* \leq x^*,y$. This then gives
$\rho(x), \rho(y) \leq \rho(z)$ and we obtain
$x \, R \, z$ and $y \, R \, z$. Therefore, $z \in R(y)$, $x \in R(y)^\UP$,
$\{x\}^\UP \subseteq R(y)^{\UP \UP} = R(y)^\UP$, and
$\varphi(x) = (\emptyset,\{x\}^\UP) \leq (R(y),R(y)^\UP) = \varphi(y)$.

On the other hand, if $\varphi(x) \leq \varphi(y)$, then
$x \in \{x\}^\UP \subseteq R(y)^\UP$ and so there exists $z$
such that $x \, R \, z$ and $y \, R \, z$. Thus,
$x = \rho(x) \leq \rho(z)$ and $y ^* = \rho(y) \leq \rho(z)$. 
If $\rho(z) = z$, then $z \in \mathcal{J}^- \cup \mathcal{J}^*$
and $z \leq z^*$. We have $x \leq z$ and $y^* \leq z \leq z^*$
implying $x \leq z \leq y$. If $\rho(z) = z^*  \neq z$, then
$z \in \mathcal{J}^+$ and $z^* < z$. Then $x \leq z^* < z$
and $y^* \leq z^*$ giving $x < z \leq y$.

(iii) Now $x,y \notin \mathcal{J}^-$, $\rho(x) = x^*$, and
$\rho(y) = y^*$. If $x \leq y$, then $\rho(y) = y^* \leq
x^* = \rho(x)$ and $y \, R \, x$. Therefore, $R(x) \subseteq R(y)$,
and $\varphi(x) = (R(x),R(x)^\UP) \leq (R(y),R(y)^\UP) = \varphi(y)$.
Conversely, if $\varphi(x) \leq \varphi(y)$, then $R(x) \subseteq R(y)$.
Since $x \in R(x)$, we have $y \, R \, x$. Thus,
$y^* = \rho(y) \leq \rho(x) = x^*$ and $x \leq y$.

(iv) If $x \notin \mathcal{J}^-$ and $y \in \mathcal{J}^-$, then
$x \leq y$ is impossible, because $\mathcal{J}^-$ is a down-set.
Similarly, $\varphi(x) = (R(x),R(x)^\UP) \leq (\emptyset,\{y\}^\UP)
= \varphi(y)$ is not possible, because $R(x) \neq \emptyset$. 

\medskip
We have shown that $x \leq y$ if and only if $\varphi(x) \leq \varphi(y)$
in all cases. Hence, $\varphi \colon \mathcal{J} \to \mathcal{J}(RS)$
is an order-embedding. Next we prove that $\varphi$ is onto $\mathcal{J}(RS)$.
Let $j \in \mathcal{J}(RS)$. We consider three disjoint cases.

(i) Assume $j = (\emptyset,\{x\}^\UP)$ for some $x \in U = \mathcal{J}$
such that $|R(x)| \geq 2$. If $x \in \mathcal{J}^-$, then
$\varphi(x) = (\emptyset,\{x\}^\UP) = j$. 
If $x \in \mathcal{J}^+$,
then $\varphi(x) = (R(x),R(x)^\UP)$ and
$\varphi(x^*) = (\emptyset,\{x^*\}^\UP) =  (\emptyset,\{x\}^\UP) = j$.
Case $x \in \mathcal{J}^*$ may be excluded, since it means
$R(x) = \{x\}$.

(ii) If $j = (\{x\},\{x\}^\UP)$ for some $x \in U$ such
that $R(x) = \{x\}$, then $ x \, R \, x^*$ and $x^*  R \, x$
imply $x = x^*$. Thus, $x \in \mathcal{J}^*$, and we infer $\varphi(x) = j$.

(iii) Suppose that $j = (R(x),R(x)^\UP)$ for some $x \in U$
such that $|R(x)| \geq 2$. Similarly as in case (i), 
if $x \in \mathcal{J}^+$, then $\varphi(x) = j$ and
if $x \in \mathcal{J}^-$, then $\varphi(x^*) = j$. 
Case $x \in \mathcal{J}^*$ is not possible.

Thus, $\varphi$ is an order-isomorphism of the ordered set $\mathcal{J}$ 
onto the ordered set $\mathcal{J}(RS)$. 
Because $A$ and $RS$ are isomorphic as lattices, they are isomorphic 
as Heyting algebras also.  
To show the isomorphism of the algebras $\mathbb{A}$ and $\mathbb{RS}$, 
by Corollary~\ref{Cor:DemorgranIsom} it is sufficient to prove that 
$\varphi(j^*) = \varphi(j)^*$ for all $j \in \mathcal{J}$.

\medskip\noindent%
We consider three cases:

(i) If $x \in \mathcal{J}^-$, then $x^* \in \mathcal{J}^+$ and
\[\varphi(x^*) = (R(x^*),(R(x^*)^\UP) = (R(x),R(x)^\UP) = 
(\emptyset,\{x\}^\UP)^* = \varphi(x)^*.\]

(ii) If $x \in \mathcal{J}^*$, then $x = x^*$ and
\[
\varphi(x^*) = (\{x^*\},\{x^*\}^\UP) =  (\{x\},\{x\}^\UP) = \varphi(x) = \varphi(x)^* .
\]

(iii) If $x \in \mathcal{J}^+$, then $x^* \in \mathcal{J}^-$ and
\[
\varphi(x^*) = (\emptyset,\{x^*\}^\UP) = (\emptyset,\{x\}^\UP)
= (R(x),R(x)^\UP)^* = \varphi(x)^*.
\]
Thus, $\mathbb{A}$ and $\mathbb{RS}$ are isomorphic Nelson algebras.
\hfill \qed

\medskip\medskip%

\begin{example}\label{Exa:Construction}
Let us consider the Nelson algebra $\mathbb{A}$ presented in Figure~\ref{Fig:Fig1}(a).
The operation $c$ is defined by $c(0) = 1$, $c(a) = f$, $c(b) = e$, and $c(c) = d$.
The set of completely join-irreducible elements $\mathcal{J}$ is $\{a,b,d,e,f\}$.
It is easy to observe that $a^* = e$, $b^* = f$, and $d^* = d$. Hence,
$\mathcal{J}^- = \{a,b\}$, $\mathcal{J}^* = \{d\}$, and $\mathcal{J}^+ = \{e,f\}$.
Let us consider the construction of the relation $R$ presented in the proof of Theorem~\ref{Thm:MAIN}.

\begin{figure}[ht] \label{Fig:Fig1}
\centering
\includegraphics[width=80mm]{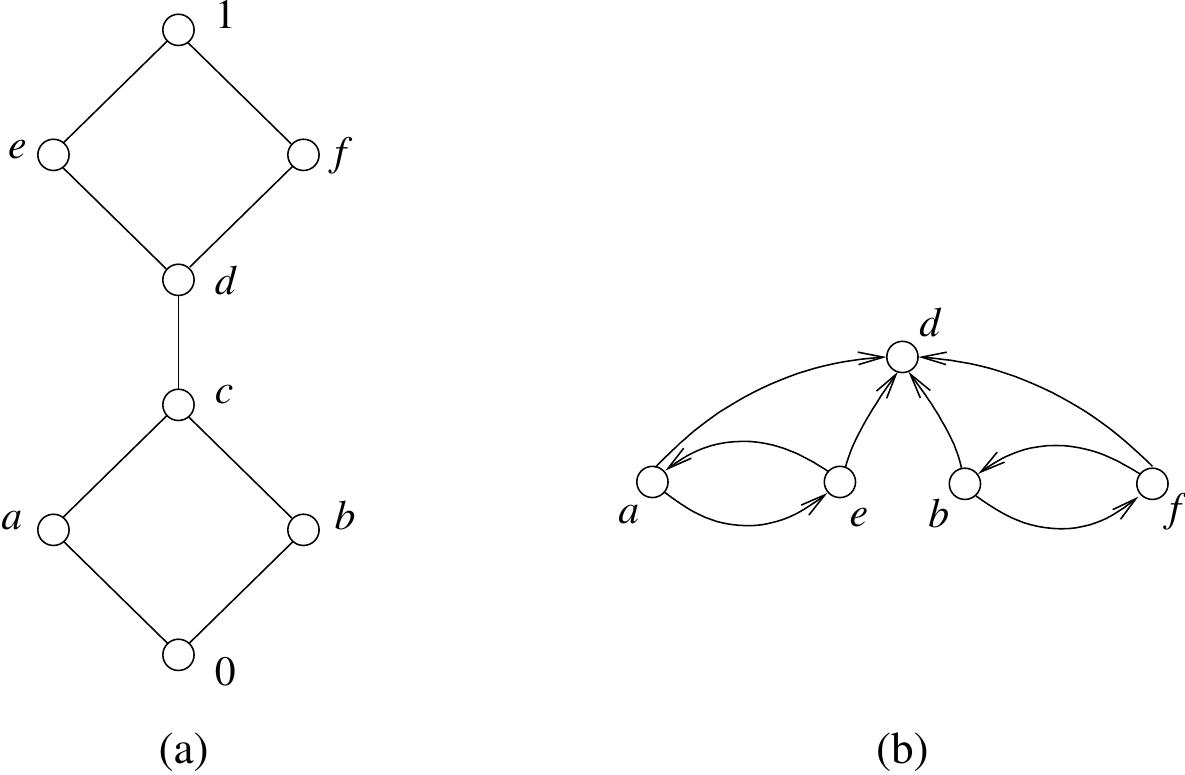}
\caption{ }
\end{figure}

The map $\rho$ is defined as $\rho(a) = \rho(e) = a$, $\rho(d) = d$, and
$\rho(b) = \rho(f) = b$. The relation $R$ on $U = \mathcal{J}$ is then defined
by $x \, R \, y$ iff $\rho(x) \leq \rho(y)$ and is depicted in 
Figure~\ref{Fig:Fig1}(b). Note that since $R$ is reflexive, there should be
an arrow from each circle to the circle itself, but such loops are omitted.
Now $RS = \{ (\emptyset,\emptyset)$, $(\emptyset,\{a,e\})$, $(\emptyset,\{b,f\})$, $(\emptyset,\{a, b,e, f\})$,
$(\{d\}, U)$, $(\{a,d,e\}, U)$, $(\{b,d,f\}, U)$, $(U,U)\}$ and clearly $\mathbb{RS} \cong \mathbb{A}$.
\end{example}

For rough sets lattices determined by equivalences, there exists the following representation
theorem: for every regular double Stone lattice $(A,\vee,\wedge, {^*}, {^+}, 0, 1)$, there exists a 
set $U$ and an equivalence $R$ on $U$ such that $A$ is isomorphic to a
subalgebra of $(RS, \cup, \cap, {^*}, {^+}, (\emptyset,\emptyset), (U,U))$; note that in $RS$, 
the pseudocomplement of $\mathcal{A}(X)$ is $(U \setminus X^\UP, U \setminus X^\UP)$ 
and its dual pseudocomplement is  $(U \setminus X^\DOWN, U \setminus X^\DOWN)$; see \cite{DemOrl02, GeWa92}. 
By applying  Theorem~\ref{Thm:MAIN} and Proposition~\ref{Prop:Luka}, we can prove the 
following  result for rough sets determined by equivalences.

\begin{corollary} \label{Cor:RepresentationEq}
Let\/ $\mathbb{A}$ be a semisimple Nelson algebra defined on an algebraic lattice. 
Then, there exists a set $U$ and an equivalence $R$ on $U$ such that $\mathbb{A} \cong \mathbb{RS}$.
\end{corollary}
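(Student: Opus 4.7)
The plan is to combine the main representation Theorem~\ref{Thm:MAIN} with the characterisation of three-valued {\L}ukasiewicz algebras given in Proposition~\ref{Prop:Luka}, using the known fact (recalled just before Proposition~\ref{Prop:Luka}) that the class of three-valued {\L}ukasiewicz algebras coincides with the class of semisimple Nelson algebras.

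First, I would apply Theorem~\ref{Thm:MAIN} to $\mathbb{A}$: since $\mathbb{A}$ is a Nelson algebra whose underlying lattice is algebraic, there is a set $U$ and a quasiorder $R$ on $U$ with $\mathbb{A}\cong \mathbb{RS}$ as Nelson algebras. The only thing left is to argue that the quasiorder $R$ produced by Theorem~\ref{Thm:MAIN} is in fact an equivalence.

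For this, I would use that $\mathbb{A}$ is semisimple, hence a three-valued {\L}ukasiewicz algebra. Being a three-valued {\L}ukasiewicz algebra is determined by equations in the signature $(\vee,\wedge,c,\Delta,1)$, and $\Delta$ is definable in any semisimple Nelson algebra from the Nelson operations; since $\mathbb{A}\cong \mathbb{RS}$ as Nelson algebras, this isomorphism transports the {\L}ukasiewicz structure and so $\mathbb{RS}$ is itself a three-valued {\L}ukasiewicz algebra. Proposition~\ref{Prop:Luka} then forces $R$ to be an equivalence on $U$, completing the proof.

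The main (and essentially only) obstacle is the transfer of the semisimplicity / {\L}ukasiewicz structure across the isomorphism furnished by Theorem~\ref{Thm:MAIN}. Once we know that $\Delta$ is definable from the Nelson operations on a semisimple Nelson algebra (equivalently, that semisimplicity of a Nelson algebra is preserved under isomorphism of its Nelson reduct, which is clear), the argument is just a two-line chain: semisimple Nelson $\Rightarrow$ three-valued {\L}ukasiewicz, transport along $\mathbb{A}\cong\mathbb{RS}$, then apply Proposition~\ref{Prop:Luka} to conclude that $R$ is an equivalence.
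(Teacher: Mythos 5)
Your proposal is correct and follows exactly the paper's own argument: apply Theorem~\ref{Thm:MAIN}, use the coincidence of semisimple Nelson algebras with three-valued {\L}ukasiewicz algebras to transport that structure across the isomorphism to $\mathbb{RS}$, and invoke Proposition~\ref{Prop:Luka} to conclude that $R$ is an equivalence. The extra care you take about why the {\L}ukasiewicz structure transfers along a Nelson-algebra isomorphism is a reasonable elaboration of a step the paper treats as immediate.
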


\begin{proof} Suppose that $\mathbb{A}$ is a semisimple Nelson algebra.
Then, by Theorem~\ref{Thm:MAIN}, there exists a set $U$
and a quasiorder $R$ on $U$ such that $\mathbb{A}$ and $\mathbb{RS}$ are 
isomorphic  Nelson algebras. Because $\mathbb{A}$ is a semisimple Nelson algebra,
then $\mathbb{A}$ and $\mathbb{RS}$ are isomorphic three-valued  
{\L}ukasiewicz algebras. This implies by  Proposition~\ref{Prop:Luka} that 
$R$ must be an equivalence.
\end{proof}

Let us denote by $R_\mathcal{J}$ the quasiorder on 
$\mathcal{J}$ constructed in the proof of Theorem~\ref{Thm:MAIN}. 
Let us note that the set $\mathcal{J}$ and 
the relation $R_\mathcal{J}$ are not necessary minimal in the 
sense that there may exist a set $U$ of smaller
cardinality than $\mathcal{J}$ and a quasiorder $R$ on $U$ 
determining the same Nelson algebra. For instance, 
in case of Example~\ref{Exa:Construction}, the same algebra can be obtained
by the relation $R = \delta_U \cup \{ (a,c), (b,c)\}$ on
the three-element set $U = \{a,b,c\}$, where
$\delta_U$ denotes the identity relation of $U$.
However, $R_\mathcal{J}$ has the property that any equivalence
class of $R_\mathcal{J} \cap {R_{\mathcal{J}}}^{-1}$ 
is of the form $\{j,j^*\}$, where $j \in \mathcal{J}$.
Therefore, any rough set algebra $\mathbb{RS}$ determined by a quasiorder
can be generated also by the quasiorder $R_\mathcal{J}$ having the
property that equivalence classes of $R_\mathcal{J} \cap {R_{\mathcal{J}}}^{-1}$
have at most two elements.
In case of three-valued {\L}ukasiewicz algebras this then means
that each equivalence class of $R_\mathcal{J}$ has at most two elements.
Hence, the construction is minimal in the above sense.

\begin{remark} \label{Rem:Boolean} 
Notice that the Nelson algebra  $\mathbb{A}$  is isomorphic to a Boolean
algebra whenever the quasiorder $R_\mathcal{J}$ of our construction is a partial 
order; in fact, for our construction, the following statements are 
equivalent: 
\begin{enumerate}[\rm (a)]
\item $R_\mathcal{J}$ is a partial order;
\item $R_\mathcal{J} = \delta_\mathcal{J}$;
\item $\mathbb{RS} \cong (\wp(\mathcal{J}), \cup, \cap,{}', \emptyset,\mathcal{J})$;
\item $\mathbb{A}$ is a Boolean algebra.
\end{enumerate}

Namely, if $R_\mathcal{J}$ is a partial order, then our construction
gives $j = j^*$ for all $j \in \mathcal{J}$. So, by Lemma~\ref{Lem:StarProperties},
$i \leq j$ implies $i = j$ for all $i,j \in \mathcal{J}$. 
Thus, $(\mathcal{J},\leq)$ is an antichain and $R_\mathcal{J} = \delta_\mathcal{J}$. 
As (b)$\Rightarrow$(a) is obvious, we get (a)$\Leftrightarrow$(b).

If $R_\mathcal{J} = \delta_\mathcal{J}$, then $RS = \{(X,X) \mid X \subseteq \mathcal{J}\}$ 
and $c(X,X)=(\mathcal{J} \setminus X, \mathcal{J} \setminus X)$.
Therefore, we have  $\mathbb{RS} \cong (\wp(\mathcal{J}), \cup, \cap,{}', \emptyset,\mathcal{J})$.
Thus, (b)$\Rightarrow$(c), and (c)$\Rightarrow$(d) is clear by Theorem~~\ref{Thm:MAIN}.
If $\mathbb{A}$ is defined on an algebraic Boolean lattice, then this lattice is 
atomistic and $\mathcal{J}$ coincides with the set of its atoms. This means that $(\mathcal{J},\leq)$ is 
an antichain and $R_\mathcal{J} = \delta_\mathcal{J}$ by our construction.
So, (d)$\Rightarrow$(b).
\end{remark}

\providecommand{\bysame}{\leavevmode\hbox to3em{\hrulefill}\thinspace}
\providecommand{\MR}{\relax\ifhmode\unskip\space\fi MR }
% \MRhref is called by the amsart/book/proc definition of \MR.
\providecommand{\MRhref}[2]{%
  \href{http://www.ams.org/mathscinet-getitem?mr=#1}{#2}
}
\providecommand{\href}[2]{#2}

%\bibliographystyle{amsplain}
%\bibliography{Literature}

\end{document}